\let\pa\partial
\let\na\nabla
\let\eps\varepsilon
\newcommand{\N}{{\mathbb N}}
\newcommand{\R}{{\mathbb R}}
\newcommand{\diver}{\operatorname{div}}
\newtheorem{theorem}{Theorem}
\newtheorem{lemma}[theorem]{Lemma}
\newtheorem{remark}[theorem]{Remark}
\begin{document}

\title[A diffusion system for biofilm growth]{Existence analysis for a reaction-diffusion
Cahn--Hilliard-type system with degenerate mobility and singular potential modeling 
biofilm growth} 

\author[C. Helmer]{Christoph Helmer}
\address{Institute of Analysis and Scientific Computing, Vienna University of  
	Technology, Wiedner Hauptstra\ss e 8--10, 1040 Wien, Austria}
\email{christoph.helmer@tuwien.ac.at} 

\author[A. J\"ungel]{Ansgar J\"ungel}
\address{Institute of Analysis and Scientific Computing, Vienna University of  
	Technology, Wiedner Hauptstra\ss e 8--10, 1040 Wien, Austria}
\email{juengel@tuwien.ac.at} 

\date{\today}

\thanks{The authors acknowledge partial support from   
the Austrian Science Fund (FWF), grants P33010 and F65.
This work has received funding from the European 
Research Council (ERC) under the European Union's Horizon 2020 research and 
innovation programme, ERC Advanced Grant no.~101018153.} 

\begin{abstract}
The global existence of bounded weak solutions to a diffusion system modeling biofilm growth is proven. The equations consist of a reaction-diffusion equation for the substrate concentration and a fourth-order Cahn--Hilliard-type equation for the volume fraction
of the biomass, considered in a bounded domain with no-flux boundary conditions. The main difficulties are coming from the degenerate diffusivity and mobility, the singular potential arising from a logarithmic free energy, and the nonlinear reaction rates.
These issues are overcome by a truncation technique and a Browder--Minty trick to identify the weak limits of the reaction terms.
The qualitative behavior of the solutions is illustrated by numerical experiments in one space dimension, using a BDF2 (second-order backward Differentiation Formula) finite-volume scheme.
\end{abstract}

\keywords{Biofilms, reaction-diffusion equation, Cahn--Hilliard equation, degenerate
mobility, singular potential, existence of solutions, logarithmic free energy.}
 
\subjclass[2000]{35K35, 35K65, 35K67, 35Q92, 92C17.}

\maketitle


\section{Introduction}

Biofilms are prevalent in nature and occur, for instance, 
in lakes, on rocks, and in sediments.
They play an important rule in medicine, where they attach surfaces of biomedical devices 
like catheters, and in wastewater treatment, where they convert organic matter in the water
into bacterial biomass. Biofilms consist of microorganisms that are
embedded in extracellular polymeric stubstances (EPS), which are produced by the bacteria
within the biofilm. In this paper, we analyze a variant of the model derived in \cite{WaZh12}
from kinetic equations. The model consists of a reaction-diffusion equation for the
substrate concentration and a Cahn--Hilliard-type equation for the volume fraction
of the biomass, composed of the EPS and bacteria. 
The particular feature of this model is that it contains a degenerate
diffusivity and mobility, a singular potential, and nonlinear production rates.

\subsection{Model setting}

The dynamics of the biofilm is given by the volume fraction
of the biomass $u(x,t)$ and the substrate concentration $v(x,t)$:
\begin{align}
  & \pa_t v - \diver((1-u)\na v) = g(u,v), \label{1.v} \\
	& \pa_t u - \diver(M(u)\na\mu) = h(u,v), \label{1.u} \\
	& \mu = -\Delta u + f'(u)\quad\mbox{in }\Omega,\ t>0, \label{1.mu}
\end{align}
where $\Omega\subset\R^d$ ($d\ge 1$) is a bounded domain.
Denoting by $u_s$ the solvent concentration, we impose the
volume-filling condition $u_s+u=1$ \cite[Section 5.2]{WaZh10}.
Equations \eqref{1.v}--\eqref{1.mu} are scaled, and we have set
the scaled physical parameters equal to one; see Section \ref{sec.scaling}
for the physical values. The initial and boundary conditions read as
\begin{align}
  & u(0) = u^0, \quad v(0) = v^0\quad\mbox{in }\Omega, \label{1.ic} \\
	& (1-u)\na v\cdot\nu = M(u)\na\mu\cdot\nu = \na u\cdot\nu = 0\quad\mbox{on }\pa\Omega,\ t>0.
	\label{1.bc}
\end{align}
 
The diffusion of the concentration vanishes if there is no solvent,
which means that equation \eqref{1.v} is degenerate with a nonstandard degeneracy.  
We suppose that the dynamics of the biomass is a gradient flow with
the chemical potential $\mu$ and the mobility $M(u)$. The mobility vanishes
if the biomass or the solvent vanish, $M(0)=M(1)=0$, and we choose
\begin{equation}\label{1.M}
  M(u) = u(1-u). 
\end{equation}
More general choices are possible; see Remark \ref{rem.gener}.
The chemical potential $\mu=-\Delta u+f'(u)$ is the variational derivative of the 
phase-separation gradient energy and the Flory--Huggins mixing free energy \cite{Flo42,Hug41}, given by its (nonconvex) density
\begin{equation}\label{1.FH}
	f(u) = \frac{1}{N}u\log u + (1-u)\log(1-u) + \lambda u(1-u).
\end{equation}
where $N>0$ is the generalized polymerisation index and $\lambda>0$ the
Flory--Huggins mixing parameter. The reaction terms are given by
\begin{equation}\label{1.gh}
  g(u,v) = -ug_0(v), \quad h(u,v) = u(1-u)h_0(v),
\end{equation}
where $g_0$ and $h_0$ are continuous functions. Examples are $g_0(v)=v$ and
$h_0(c)=v/(K+v)$ with $K>0$ \cite{WaZh10}.
This means that the substrate is consumed by the EPS, such that the consumption rate $g(u,v)$
is proportional to both the substrate concentration and the biomass fraction, and the polymer production rate $h_0(u)$ is modeled by Monod kinetics with half-saturation rate $K$. Here, we allow for more general reaction functions; see Assumption (A3) below.

Compared to the model in \cite{WaZh12}, we have modified the equations.
First, we neglected the velocities of the biomass and the solvent. 
Assuming that both are given by the same average velocity, 
it may be a given function or be determined
by the incompressible Navier--Stokes equations, see \cite[(5)--(6)]{Zha12}.
Our analysis works if we add a given velocity with bounded divergence. 
Second, we added the solvent fraction $u_s=1-u$ as a factor to the production rate and the mobility in equation \eqref{1.u}.
This is needed to guarantee the bound $u\le 1$ and to derive the entropy inequality associated to the system (see Section \ref{sec.main}
for details). Third, we neglect the elastic energy which simplifies the definition of the chemical potential.
Fourth, and most importantly, we have simplified the time derivative in equation \eqref{1.v} for the solvent concentration. Wang and Zhang \cite{WaZh12} suggested the two-phase equation
$\pa_t(u_sv)-\diver(u_s\na v)=g(u,v)$. However, the derivative $\pa_t(u_sv)$ introduces another degeneracy at $u_s=0$, which we are not able to treat.
A two-phase model with such a degeneracy was analyzed in \cite{JRZ19},
but in this work, the bounds for the volume fractions are a consequence
of the assumptions on the nonlinearities, which do not hold in the present situation.

\subsection{State of the art}

In the literature, many models for biofilm growth have been presented. One of the first models
was suggested in \cite{WaGu86}, consisting of a transport equation for the biofilm mass
and a differential equation for the biofilm thickness. This model, extended 
to multispecies biofilms with an equation for the free boundary, was analyzed in 
\cite{DAFr11} and refined in \cite{DFLM19} (to describe biofilm attachment). 
A different approach, based on diffusion equations
coupled to fluiddynamical models, was proposed in \cite{EPL01} and mathematically analyzed in
\cite{EEZ02}; also see the extensions in \cite{EEWZ14,ESE17}
and the numerical analysis in \cite{DJZ21,HJZ23}. 
The model of \cite{EPL01} describes the dynamics of the biomass density and nutrient
concontration, coupled with the incompressible homogeneous Navier--Stokes equations.
The mobility in the biomass equation is assumed to vanish if the biomass vanishes and blows up
if the biomass reaches its maximal value. In this way, the existence of a ``sharp front''
of biomass at the fluid/solid transition and significant biomass spreading close to the
maximum biomass value can be achieved. Another idea is to formulate the biofilm growth
as a free-boundary problem, modeling an incompressible viscous Stokes fluid in one phase
and a mixture of viscous fluid and the polymeric network in the other phase \cite{FHX14}.
Another free-boundary problem was suggested in \cite{CoKe04}, taking into account
surface forces, frictional drag generated by the EPS, hydrostatic pressure, and 
osmotic pressure that is modeled by the potential $f'(u)$ 
in the framework of the Flory--Huggins theory (see \eqref{1.FH}).

This approach was extended in \cite{ZCW08a,ZCW08b} by assuming that the biomass is driven
by the chemical potential given by a free energy density that includes the Flory--Huggins
mixing term and a gradient energy density. Then the diffusion equation for the biomass
becomes of fourth order and is similar to the Cahn--Hilliard equation, which was introduced
to study phase separation in binary alloys \cite{CaHi58}. Since fourth-order equations
generally do not allow for a maximum principle, the assumption that the mobility
vanishes at the minimal and maximal value of the mass variable guarantees
lower and upper bounds. The first existence analysis
of Cahn--Hilliard equations was given in \cite{Yin92} in one space dimension and in
\cite{ElGa96} in several space dimensions. Most of the analytical results on the 
Cahn--Hilliard equations do not contain reaction terms. Moreover, if reaction terms
are included in the Cahn--Hilliard model, nondegenerate mobilities are required;
see, e.g., \cite{AKK11,CMZ14,GaLa16}. When the gradient term in the free energy is replaced
by a nonlocal spatial interaction energy, degenerate mobilities (and singular potentials) 
can be treated \cite{Fri20,IoMe18}. 
{Up to our knowledge, there are only few papers which consider degenerate mobilities, singular potentials, and nonlinear reaction terms.
In \cite{EGN21}, the authors consider a degenerate mobility and a singular potential combined with reaction terms. In contrast to our work, upper bounds for the variables cannot be proved due to the choice of the potential. Furthermore, our degeneracy in the coupled reaction--diffusion equation is different, which causes additional difficulties. Upper bounds have been proved in \cite[Section 7.5]{E20}, but the reaction terms contain the chemical potential.}

\subsection{Main result and key ideas}\label{sec.main}

We impose the following assumptions:

\begin{itemize}
\item[(A1)] Domain: $\Omega\subset\R^d$ $(d\ge 1)$ is a bounded domain with Lipschitz 
continuous boundary. Set $\Omega_T=\Omega\times(0,T)$.
\item[(A2)] Initial data: $u^0\in H^1(\Omega)$ satisfies $0<u_*\le u^0\le u^*$ in $\Omega$ 
for some $u_*,u^*>0$ and $v^0\in L^2(\Omega)$ satisfying $0\le v^0\le 1$ in $\Omega$.
\item[(A3)] Source terms: $g_0\in C^0([0,1])$ is nondecreasing and satisfies $g_0(0)=0$, 
and $h_0\in C^1([0,1])$ is nondecreasing.
\end{itemize}

Our main result is the global existence of bounded weak solutions.

\begin{theorem}[Global existence]\label{thm.ex}
Let Assumptions (A1)--(A3) hold. 
Then there exists a weak solution $(u,v)$ to \eqref{1.v}--\eqref{1.bc} with the
constitutive relations \eqref{1.M}--\eqref{1.gh}, satisfying
$0\le u\le 1$, $0\le v\le 1$ in $\Omega_T$,
\begin{align*}
  & u\in L^2(0,T;H^2(\Omega))\cap C^0([0,T];H^1(\Omega)), \\
	& (1-u)\na v,\ \pa_tu,\ \pa_t v\in L^2(0,T;H^1(\Omega)'),  
\end{align*}
and the weak formulation for all $\phi_1$, $\phi_2\in L^2(0,T;H^2(\Omega))$,
\begin{align*}
  \int_0^T\langle\pa_t v,\phi_1\rangle dt
	+ \int_0^T\langle (1-u)\na v,\na\phi_1\rangle dt
	&= \int_0^T\int_\Omega g(u,v)\phi_1 dxdt, \\
	\int_0^T\langle\pa_t u,\phi_2\rangle dt
	+ {\int_0^T\int_\Omega J\cdot\na \phi_2 dxdt}
	&= \int_0^T\int_\Omega h(u,v)\phi_2 dxdt,
\end{align*}
where $\langle\cdot,\cdot\rangle$ is the dual product between $H^1(\Omega)'$ and $H^1(\Omega)$, and {$J\in L^2(\Omega_T;\R^d)$. The expressions $(1-u)\na v$ and $J=-M(u)\na \mu$ are understood in the weak sense, i.e., for all $\xi,\chi\in L^2(0,T;H^1(\Omega;\R^d))$ with 
$\xi\cdot\nu=\chi\cdot\nu = 0$ on $\pa\Omega \times (0,T)$, it holds that 
\begin{align*}
    \int_0^T\langle (1-u)\na v,\xi\rangle dt
    &= \int_0^T\int_\Omega v(-\na u\cdot\xi
    + (1-u)\diver\xi)dx dt, \\
	\int_0^T\int_{\Omega} J \cdot \chi dx dt &= 
	\int_0^T\int_{\Omega}\big(\Delta u \diver(M(u)\chi)  
	+ M(u) f''(u) \nabla u \cdot \chi\big) dx dt. 
\end{align*}}
\end{theorem}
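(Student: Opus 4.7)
I would construct the weak solution as the limit of a regularized problem. For $\eta\in(0,1/2)$, replace the mobility by the non-degenerate $M_\eta(u)=M(u)+\eta$ and the Flory--Huggins density $f$ by a $C^2$ function $f_\eta$ obtained by quadratic extension of $f$ outside $[\eta,1-\eta]$, so that $f'_\eta$ is globally Lipschitz on $\R$ and coincides with $f'$ on $[\eta,1-\eta]$. The resulting system is semilinear with smooth coefficients, and a solution $(u_\eta,v_\eta,\mu_\eta)$ is obtained by a Faedo--Galerkin scheme on the Neumann eigenbasis of $-\Delta$: local existence for the finite-dimensional ODE system comes from Carath\'eodory theory, and global extension is provided by the a priori estimates of Step~2.

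\textbf{Step 2 (Uniform estimates).} The energy identity, obtained by testing the $u$-equation against $\mu_\eta$ and the $\mu$-relation against $\pa_t u_\eta$, reads
\[
\frac{\dd}{\dd t}\!\int_\Omega\!\Big(\tfrac12|\na u_\eta|^2+f_\eta(u_\eta)\Big)\dd x+\int_\Omega M_\eta(u_\eta)|\na\mu_\eta|^2\dd x=\int_\Omega h(u_\eta,v_\eta)\mu_\eta\,\dd x,
\]
whose right-hand side is controlled because $h=M(u)h_0(v)$ is already weighted by the degenerate mobility; this yields $u_\eta\in L^\infty(0,T;H^1(\Omega))$ and $\sqrt{M_\eta(u_\eta)}\na\mu_\eta\in L^2(\Omega_T)$. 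A convex entropy $\Phi_\eta$ with $\Phi''_\eta=1/M_\eta$, tested against the $u$-equation, then gives an $L^2$-bound for $\Delta u_\eta$ together with a uniform control of $\Phi_\eta(u_\eta)$ which, passing to the limit, enforces $0\le u\le 1$ a.e. For the substrate, Stampacchia truncation with $(v_\eta)_-$ and $(v_\eta-1)_+$, combined with $g_0(0)=0$, the monotonicity of $g_0$, and $1-u_\eta\ge 0$, yields $0\le v_\eta\le 1$; testing with $v_\eta$ delivers the weighted gradient bound $\sqrt{1-u_\eta}\,\na v_\eta\in L^2(\Omega_T)$. Time-derivative bounds $\pa_t u_\eta,\pa_t v_\eta\in L^2(0,T;H^1(\Omega)')$ then follow from the equations.

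\textbf{Step 3 (Compactness and passage to the limit).} Aubin--Lions applied to $u_\eta$ yields strong convergence in $L^2(0,T;H^1(\Omega))$ and a.e.~in $\Omega_T$, together with weak convergence of $\Delta u_\eta$ in $L^2(\Omega_T)$. The integration-by-parts identity for $J=-M(u)\na\mu$ stated in the theorem is tailored to avoid $\na\mu$ entirely, involving only $\Delta u$ and $\na u$, and hence passes to the limit directly from the strong convergence of $u_\eta$ and weak convergence of $\Delta u_\eta$. An analogous remark applies to the weak definition of $(1-u)\na v$, which is formulated so that only strong convergence of $u_\eta$ and $v_\eta$ is needed together with the regularity of the test field~$\xi$.

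\textbf{Step 4 (Main obstacle: the reaction terms).} The hardest step is the identification of the weak limits of $g_0(v_\eta)$ and $h_0(v_\eta)$, because the degenerate diffusion in the substrate equation does not by itself produce strong compactness of $v_\eta$. Here I would use the Browder--Minty monotonicity trick: since $g_0,h_0$ are continuous and nondecreasing, the inequality
\[
\int_{\Omega_T}(g_0(v_\eta)-g_0(w))(v_\eta-w)\,\dd x\,\dd t\ge 0
\]
holds for every test $w\in L^\infty(\Omega_T)$. Combined with the energy-type identity obtained by testing the $v$-equation with $v_\eta$ itself (which, via weak lower semicontinuity of the dissipation $\int(1-u_\eta)|\na v_\eta|^2$, controls $\limsup_\eta\int u_\eta g_0(v_\eta)v_\eta$) and the already-established strong convergence of $u_\eta$, the limit $g_0(v_\eta)\rightharpoonup\bar g$ is identified with $g_0(v)$ in the sense required for the reaction term $ug_0(v)$; the same argument applies to $h_0$. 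This Minty-type identification is the main technical obstacle, and once it is resolved the limit $(u,v)$ satisfies the weak formulation claimed in the theorem.
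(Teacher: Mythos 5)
Your overall architecture (regularize mobility and potential, Faedo--Galerkin, energy plus entropy estimates, Aubin--Lions, Minty for the reactions) is the same as the paper's, but several of your concrete regularization choices would break the very estimates you rely on. First, $M_\eta(u)=u(1-u)+\eta$ is negative for $u$ outside a small neighbourhood of $[0,1]$, and the approximate solutions $u_\eta$ cannot be confined to $[0,1]$ (the fourth-order equation admits no maximum principle; the bounds $0\le u\le 1$ are only recovered in the limit). Hence your equation is not parabolic on the whole range of $u_\eta$, the entropy $\Phi_\eta$ with $\Phi_\eta''=1/M_\eta$ is neither well defined nor convex, and $M_\eta f_\eta''$ is not uniformly bounded on $\R$ --- the latter being exactly what is needed to pass to the limit in the weak form of $J$. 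The paper avoids all of this by freezing $M$ at the positive constants $M(\delta)$, $M(1-\delta)$ outside $[\delta,1-\delta]$ and by a piecewise-polynomial extension of $f_1$ with $f_{1,\delta}''\ge 0$ and $f_{1,\delta}'$ bounded. For the same reason your use of ``$1-u_\eta\ge 0$'' in the Stampacchia argument for $v_\eta$ is unjustified; the paper truncates the diffusion coefficient to $[1-u]_+^1$ and the reactions to $[u]_+^1$, $[u]_+[1-u]_+$ precisely so that the sign arguments survive without a priori bounds on $u_\eta$. Moreover, without an additional uniformly elliptic term $\kappa\Delta v$ in the substrate equation you only control $\sqrt{1-u_\eta}\,\na v_\eta$, not $\na v_\eta$, so $v_\eta\notin L^2(0,T;H^1(\Omega))$ and the test functions $(v_\eta)_-$, $(v_\eta-1)_+$ are not admissible.

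The most serious gap is in Step 4. Your Minty argument needs $\limsup_\eta\int u_\eta g_0(v_\eta)v_\eta\le\int u\,\bar g\,v$, which you propose to extract from the energy identity and lower semicontinuity of the dissipation. But closing that inequality requires the \emph{reverse} energy identity for the limit, i.e.\ testing the limit equation with $v$ itself --- and $v$ is not an admissible test function, since in the limit $\na v$ does not exist as an $L^2$ function (only the distributional object $(1-u)\na v\in L^2(0,T;H^1(\Omega)')$ survives), and the weak limit of $(1-u_\eta)^{1/2}\na v_\eta$ cannot be identified with anything of the form $(1-u)^{1/2}\na v$. The paper circumvents this by running Minty in the $H^1(\Omega)'\times H^1(\Omega)$ duality: Aubin--Lions gives the \emph{strong} convergence $v_\delta\to v$ in $L^2(0,T;H^1(\Omega)')$, and one checks that the weighted nonlinearity $[u_\delta]_+[1-u_\delta]_+h_0(v_\delta)$ is bounded in $L^2(0,T;H^1(\Omega))$ (using the weighted gradient bound for $v_\delta$ and the $H^2$ bound for $u_\delta$), so the troublesome product of two weak limits becomes a strong--weak pairing. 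Without this device, or a substitute for it, your identification of the reaction terms does not close.
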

 
{Note that the weak formulation of $J$ is possible since $M(u)f''(u)$ is bounded.}
The proof of Theorem \ref{thm.ex} is based on a suitable approximation scheme, 
truncating the nonlinearities and using a Galerkin method similarly as in \cite{ElGa96}.
Uniform estimates are obtained from the energy and entropy equalities, proved
in Lemma \ref{lem.energy} for the sequence of approximate solutions, 
\begin{align}
  \frac{d}{dt}\int_\Omega\bigg(\frac12|\na u|^2 + f(u)\bigg)dx
	+ \int_\Omega M(u)|\na\mu|^2 dx &= \int_\Omega h(u,v)\mu dx, \label{1.energy} \\
	\frac{d}{dt}\int_\Omega\Phi(u)dx + \int_\Omega\big((\Delta u)^2 + f''(u)|\na u|^2\big)dx
	&= \int_\Omega h(u,v)\Phi'(u)dx, \label{1.entropy}
\end{align}
where $\Phi$ is defined by $\Phi''(u)=1/M(u)$ and $\Phi(1/2)=\Phi'(1/2)=0$.
{These identities can be obtained formally as follows. We multiply \eqref{1.u} by $\mu$ and observe that
$$
  \langle\pa_t u,\mu\rangle 
  = \langle\pa_t u,-\Delta u+f'(u)\rangle
  = \frac{d}{dt}\bigg(\frac12\int_\Omega|\na u|^2 + f(u)\bigg)dx
$$
to find the energy identity \eqref{1.energy}. Furthermore, a multiplication of \eqref{1.mu} by $\Phi'(u)$ and integration over $\Omega$ leads to the entropy identity \eqref{1.entropy} since,
because of $\Phi''(u)=1/M(u)$ and integration by parts,
\begin{align*}
  \int_\Omega M(u)\na\mu\cdot\na\Phi'(u)dx
  &= \int_\Omega M(u)\na(-\Delta u+f'(u))\cdot\na u\Phi''(u)dx \\
  &= \int_\Omega\big((\Delta u)^2 + f''(u)|\na u|^2)dx.
\end{align*}}%
The function $\Phi(u)$ can be interpreted as the thermodynamic entropy of the system, since a computation shows that, with $M(u)$ given by \eqref{1.M},
$$
  \Phi(u) = u\log u + (1-u)\log(1-u) + \log 2\ge 0\quad\mbox{for }0<u<1.
$$
Since $f''(u)|\na u|^2\ge -2\lambda|\na u|^2$ for $0 < u < 1$, the corresponding integral in \eqref{1.entropy} can 
be bounded by Gronwall's lemma and the energy bound \eqref{1.energy}.

The difficulty is to estimate the right-hand sides of \eqref{1.energy}--\eqref{1.entropy}. 
The term $h(u,v)$ contains the
factor $u(1-u)$ which cancels the singularity from $\Phi'(u)$, such that
$\int_\Omega h(u,v)\Phi'(u)dx$ is bounded. For the other integral, we include the
definition of $\mu$ and integrate by parts:
$$
  \int_\Omega h(u,v)\mu dx = \int_\Omega\big((1-2u){h_0(v)}|\na u|^2 
	+ u(1-u)h'_0(v)\na v\cdot\na u + h(u,v)f'(u)\big)dx.
$$
The last term is bounded since $h(u,v)$ cancels the singularity of the potential $f'(u)$.
The first term can be treated by Gronwall's lemma since $u$ is bounded. For the second term, we use Young's inequality:
$$
  \int_\Omega u(1-u)h'_0(v)\na v\cdot\na u dx
	\le C\int_\Omega|\na u|^2 dx + C\int_\Omega(1-u)|\na v|^2 dx,
$$
where we use the property $0\le v\le 1$. The last integral can be absorbed
by the energy bound for $v$:
\begin{equation}\label{1.substrate}
  \frac12\frac{d}{dt}\int_\Omega v^2 dx + \int_\Omega(1-u)|\na v|^2 dx
	= \int_\Omega g(u,v)v dx \le C.
\end{equation}

There is another difficulty: Because of the degeneracy in the equation for $v$,
we do not obtain an estimate for $\na v$ (see \eqref{1.substrate}) and therefore
we cannot expect strong convergence for (a subsequence of) 
the approximate solutions $(v_\delta)$ with $\delta>0$ being an approximation parameter,
but only weak* convergence in $L^\infty(\Omega_T)$. Surprisingly, the weak convergence
of $(v_\delta)$ is enough to pass to the limit $\delta\to 0$ in $(1-u_\delta)\na v_\delta$,
since this expression can be written as $\na((1-u_\delta)v_\delta)+v_\delta\na u_\delta$,
which converges weakly in the sense of distributions, since $(\na u_\delta)$ converges
strongly (up to a subsequence). However, the weak convergence is not sufficient to
perform the limit in the reaction rates. The idea is to use the duality of $H^1(\Omega)'$
and $H^1(\Omega)$ as well as a Minty-Browder trick. Indeed, since $h_0$ is nondecreasing, we have for $y\in C_0^\infty(\Omega_T)$,
\begin{align*}
  0 &\le \int_0^T\int_\Omega u_\delta(1-u_\delta)(v_\delta-y)(h_0(v_\delta)-h(y))dxdt \\
	&= \int_0^T\big\langle v_\delta-y,u_\delta(1-u_\delta)(h_0(v_\delta)-h_0(y))\big\rangle dt.
\end{align*}
(Observe that we need to truncate the factor $u_\delta(1-u_\delta)$, since we cannot expect
that $0\le u_\delta\le 1$; see Section \ref{sec.trunc}.)
By the Aubin--Lions lemma, $v_\delta\to v$ strongly in $L^2(0,T;H^1(\Omega)')$
and $u_\delta\to u$ strongly in $L^2(0,T;H^1(\Omega))$. 
Hence, a computation shows that the limit $\delta\to 0$ in the previous inequality leads to
$$
  0 \le \int_0^T\big\langle v-y,u(1-u)(h_1-h(y))\big\rangle dt,
$$
where $h_1$ is the weak $L^2(\Omega_T)$-limit of $(h_0(v_\delta))$. A Minty--Browder argument,
made precise in Lemma \ref{lem.gh}, shows that $h_1=h_0(v)$, implying that
$h(u_\delta,v_\delta)\rightharpoonup h(u,v)$ weakly in $L^2(\Omega_T)$. 

The paper is organized as follows. We formulate and prove the existence of a solution 
to a truncated regularized system in Section \ref{sec.approx}. We truncate the mobility and
the mixing free energy using the parameter $\delta$, add the regularization $\kappa>0$
in the equation for $v$ (because of the degeneracy), and introduce the Galerkin
dimension $L\in\N$. First estimates allow us to perform the limit $L\to\infty$.
Estimates uniform in $(\delta,\kappa)$ are derived from the energy and entropy
inequalities in Section \ref{sec.unif}. In Section \ref{sec.limit}, we pass to 
the limit $\delta=\kappa\to 0$. Finally, we present some numerical experiments
in one space dimension in Section \ref{sec.num} to compare our model with that one 
of \cite{WaZh12}.


\section{Existence for the approximate system}\label{sec.approx}

\subsection{Truncated regularized system}\label{sec.trunc}

We truncate the functions $M(u)$, $f(u)$, and the source terms. Let $\delta>0$
and set $[u]_+=\max\{0,u\}$ and $[u]_+^1=\min\{1,\max\{0,u\}\}$ for $u\in\R$. We
introduce for $u\in\R$
\begin{align*}
  M_\delta(u) &= \begin{cases}
	M(\delta) &\quad\mbox{if }u\le\delta, \\
	M(u) &\quad\mbox{if }\delta<u<1-\delta, \\
	M(1-\delta) &\quad\mbox{if }u\ge 1-\delta.
	\end{cases}
\end{align*}
Then $M_\delta(u)\ge M(u)$ for $u\in\R$. Furthermore, we set
$$
  D_+(u)=[1-u]_+^1. 
$$	
We approximate
the singular part $f_1(u)=N^{-1}u\log u+(1-u)\log(1-u)$ of the free energy by setting
\begin{align*}
			f_{1,\delta}(u) = \begin{cases} 
	\left( -f_1''(\delta)(\frac{3}{2}+\delta) + f_1'(\delta)\right) u + C_1(\delta), &\text{if } u \leq -2, \\
	f_1''(\delta)\big(\frac{1}{6} u^3+u^2+(\frac{1}{2}-\delta)u\big) + f_1'(\delta)u + C_2(\delta), &\text{if } -2 \leq u \leq -1, \\
	f_1(\delta)+f_1'(\delta)(u-\delta)+\frac{1}{2}f_1''(\delta)(u-\delta)^2, &\text{if } -1 \leq u \leq \delta, \\ 
	f_1(u), &\text{if } \delta < u < 1-\delta, \\
	f_1(1-\delta)+f_1'(1-\delta)(u-(1-\delta))+f_1''(1-\delta)(u-(1-\delta))^2, &\text{if } 1-\delta \leq u \leq 2, \\
	f_1''(1-\delta)\big(\frac{1}{6}u^3 + \frac{3}{2}u^2-(2+(1-\delta))u\big)+f_1'(1-\delta)u + C_3(\delta), &\text{if } 2 \leq u \leq 3, \\
	 \left((\frac{5}{2}-(1-\delta))f_1''(1-\delta)+f_1'(1-\delta)\right) u + C_4(\delta), &\text{if } u \geq 3,
	\end{cases}
\end{align*}
where 
\begin{align*}
	C_1(\delta) &= C_2(\delta) - \frac{4}{3} f_1''(\delta), \\
	C_2(\delta) &=  f_1(\delta)-f_1'(\delta)\delta +\frac{1}{2} f_1''(\delta)\bigg((1+\delta)^2-2\delta-\frac{2}{3}\bigg), \\
	C_3(\delta) &= f_1(1-\delta)+f_1'(1-\delta)(\delta-1) + f_1''(1-\delta)\bigg(\frac{1}{2} (\delta+1)^2 - 2\delta + \frac{4}{3}\bigg), \\
	C_4(\delta) &= C_3(\delta)-\frac{9}{2} f_1''(1-\delta).
\end{align*}
This means that 
\begin{align}\label{2.f1pp}
	f_{1,\delta}''(u) = \begin{cases} 
	0, \quad &\text{if } u < -2, \\
	f_1''(\delta)(u+2)	, \quad &\text{if } -2 \leq u \leq -1, \\
	f_1''(\delta), \quad &\text{if } -1 \leq u \leq \delta, \\ 
	f_1''(u), \quad &\text{if } \delta < u < 1-\delta, \\
	f_1''(1-\delta), \quad &\text{if } 1-\delta \leq u \leq 2, \\
	f_1''(1-\delta)(3-u),\quad &\text{if } 2 \leq u \leq 3, \\
	0, \quad &\text{if } u > 3,
	\end{cases}
\end{align}
and ensures that $|f_{1,\delta}'(u)| \leq C(\delta)$ for $u \in \R$. 

\begin{figure}\label{plotf}
\includegraphics[width=0.49\textwidth]{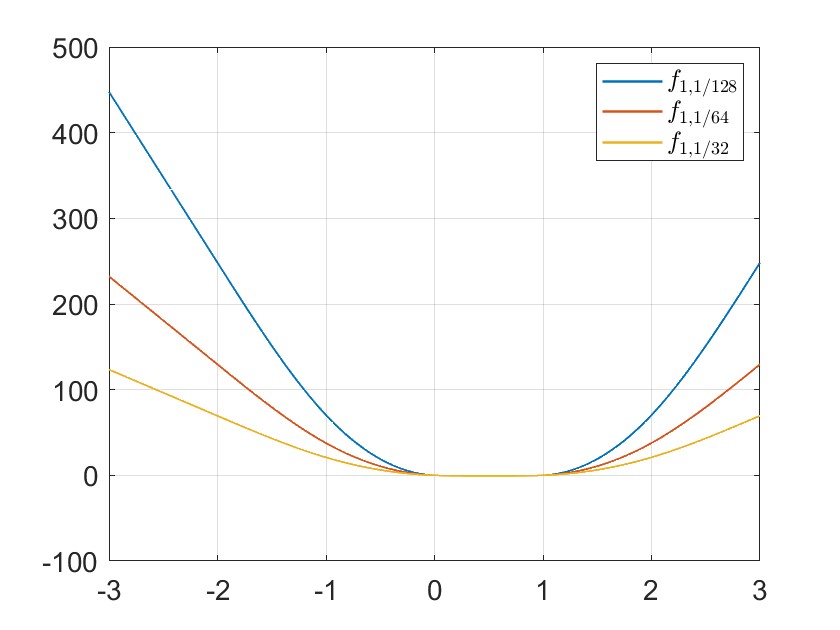}
\includegraphics[width=0.49\textwidth]{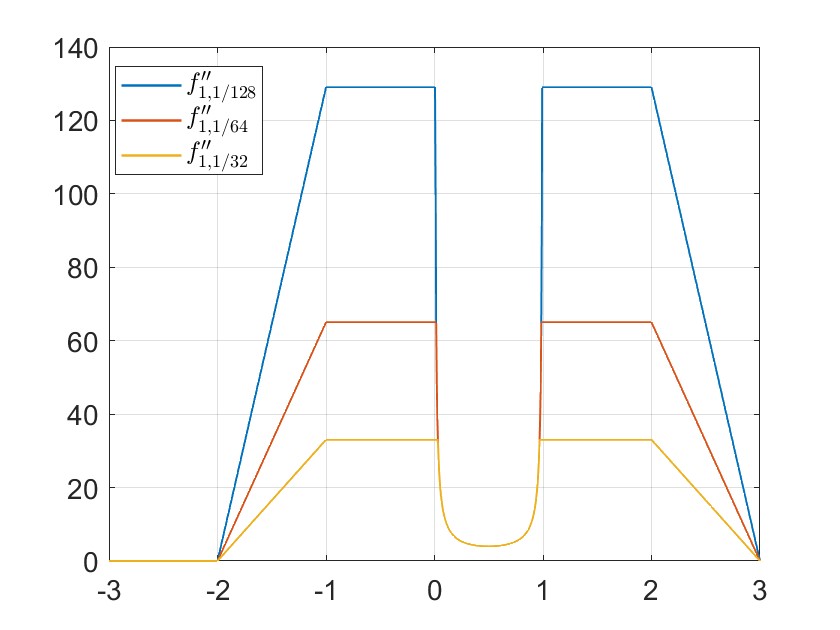}
\caption{Functions $f_{1,\delta}$ (left) and $f_{1,\delta}''$ (right) with $N = 1$.}
\end{figure}

The regular (nonconvex) part $f_2(u)=\lambda u(1-u)$ ($0\le u\le 1$) of the free energy is 
extended to $\R$ such that $|f_2(u)|\le C$ for $u\in\R$. Furthermore, we set
$f_\delta=f_{1,\delta}+f_2$, and this function is defined for all $u\in\R$.
We also need to truncate the source terms:
$$
  g_+(u,v) = -[u]_+^1g_0([v]_+^1), \quad
	h_+(u,v) = [u]_+[1-u]_+h_0([v]_+^1).
$$
Finally, let $\kappa>0$. We wish to find a solution to the truncated and regularized system
\begin{align}
  & \pa_t v - \diver(D_+(u)\na v) - \kappa\Delta v = g_+(u,v), \label{2.v} \\
	& \pa_t u - \diver(M_\delta(u)\na\mu) = h_+(u,v), \label{2.u} \\
	& \mu = -\Delta u + f'_\delta(u)\quad\mbox{in }\Omega,\ t>0, \label{2.mu}
\end{align}
subject to the initial conditions \eqref{1.ic} and the Neumann boundary conditions
\begin{equation}\label{2.bc}
  \na v\cdot\nu = \na\mu\cdot\nu = \na u\cdot\nu = 0\quad\mbox{on }\pa\Omega,\ t>0.
\end{equation}


\subsection{Galerkin approximation}\label{sec.galer}

To solve \eqref{1.ic}, \eqref{2.v}--\eqref{2.bc}, we use the Galerkin method 
(as in \cite{ElGa96}).
Let $(\phi_\ell)_{\ell\in\N}$ be the orthonormal eigenfunctions of the Laplace operator
with homogeneous Neumann boundary conditions. We can assume that $\lambda_1=0$ and
$\phi_1=\mbox{const}$. Let $L\in\N$. We wish to find solutions
$$
	v_L(x,t) = \sum_{\ell=1}^L A_\ell(t)\phi_\ell(x), \quad
  u_L(x,t) = \sum_{\ell=1}^L B_\ell(t)\phi_\ell(x), \quad
	\mu_L(x,t) = \sum_{\ell=1}^L C_\ell(t)\phi_\ell(x)
$$
to the finite-dimensional system
\begin{align}
	\int_\Omega\pa_t v_L\phi dx &= -\int_\Omega (D_+(u_L)+\kappa)\na v_L\cdot\na\phi dx
	+ \int_\Omega g_+(u_L,v_L)\phi dx, \label{2.Gv} \\
	  \int_\Omega\pa_t u_L\phi dx &= -\int_\Omega M_\delta(u_L)\na\mu_L\cdot\na\phi dx
	+ \int_\Omega h_+(u_L,v_L)\phi dx, \label{2.Gu} \\
	\int_\Omega\mu_L\phi dx &= \int_\Omega\na u_L\cdot\na\phi dx 
	+ \int_\Omega f'_\delta(u_L)\phi dx \label{2.Gmu}
\end{align}
for all $\phi\in\operatorname{span}(\phi_1,\ldots,\phi_L)$, with the initial conditions
$$
  v_L(0) = \sum_{\ell=1}^{L}(v^0,\phi_\ell)_{L^2(\Omega)}\phi_\ell dx, \quad
	u_L(0) = \sum_{\ell=1}^{L}(u^0,\phi_\ell)_{L^2(\Omega)}\phi_\ell dx.
$$
This gives an initial-value problem for a system of ordinary differential equations
for $(A_1,\ldots,A_L)$ and $(B_1,\ldots,B_L)$:
\begin{align*}
	\pa_t A_\ell &= -\int_\Omega([u_L]_+^1+\kappa)\na v_L\cdot\na\phi_\ell dx
	+ \int_\Omega g_+(u_L,v_L)\phi_\ell dx, \\
	  \pa_t B_\ell &= -\int_\Omega M_\delta(u_L)\na\mu_L\cdot\na\phi_\ell dx
	+ \int_\Omega h_+(u_L,v_L)\phi_\ell dx, \\
	C_{\ell} &= \int_\Omega\na u_L\cdot\na\phi_\ell dx 
	+ \int_\Omega f'_\delta(u_L)\phi_\ell dx\quad\mbox{for }\ell=1,\ldots,L,
\end{align*}
with the initial conditions $A_\ell(0)=(v^0,\phi_\ell)_{L^2(\Omega)}$ and
$B_\ell(0)=(u^0,\phi_\ell)_{L^2(\Omega)}$. As the right-hand side of this system
is continuous in $(A_1,\ldots,A_L)$ and $(B_1,\ldots,B_L)$, the Peano theorem
ensures the existence of a local solution. To extend this solution globally, 
we prove some a priori estimates.

\begin{lemma}[Energy estimate for the Galerkin approximation]\label{lem.enerL}\ \sloppy
There exists a constant $C(\delta)>0$ independent of $L$ such that for all $t\in(0,T)$,
\begin{align*}
  \frac12\|\na u_L(t)\|_{L^2(\Omega)}^2 &+ \int_\Omega f_\delta(u_L(t))dx
	+ \frac12 M(\delta)\int_0^t\|\na\mu_L\|_{L^2(\Omega)}^2 ds \\
	&\le \frac12\|\na u_L(0)\|_{L^2(\Omega)}^2 + \int_\Omega f_\delta(u_L(0))dx + C(\delta),
\end{align*}
and because of Assumption (A2), the right-hand side can be bounded independently of $L$.
\end{lemma}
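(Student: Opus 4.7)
The plan is the classical Cahn--Hilliard energy argument, adapted to the Galerkin setting with the truncated nonlinearities. The key observation is that both $\mu_L$ and $\pa_t u_L$ lie in $\operatorname{span}(\phi_1,\ldots,\phi_L)$, so they are admissible test functions in \eqref{2.Gu}--\eqref{2.Gmu}.

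First, I would choose $\phi=\mu_L$ in \eqref{2.Gu} and $\phi=\pa_t u_L$ in \eqref{2.Gmu}. The second choice yields, using that $f_\delta\in C^1(\R)$,
$$
  \int_\Omega\mu_L\,\pa_t u_L\,dx
  = \frac12\frac{d}{dt}\|\na u_L\|_{L^2(\Omega)}^2
    + \frac{d}{dt}\int_\Omega f_\delta(u_L)\,dx.
$$
Equating the two expressions for $\int_\Omega \mu_L\,\pa_t u_L\,dx$ gives the Galerkin energy identity
$$
  \frac{d}{dt}\bigg(\frac12\|\na u_L\|_{L^2(\Omega)}^2
    + \int_\Omega f_\delta(u_L)\,dx\bigg)
  + \int_\Omega M_\delta(u_L)|\na\mu_L|^2\,dx
  = \int_\Omega h_+(u_L,v_L)\mu_L\,dx.
$$
On the left-hand side, the truncation yields the pointwise lower bound $M_\delta(u_L)\ge M(\delta)$, producing the mobility term $M(\delta)\|\na\mu_L\|_{L^2(\Omega)}^2$ that appears in the lemma.

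The main technical point is the right-hand side, since $\mu_L$ is only controlled in $H^1$-seminorm via the mobility term. I would control its mean by testing \eqref{2.Gmu} with $\phi=\phi_1\equiv|\Omega|^{-1/2}$, which gives
$$
  \int_\Omega\mu_L\,dx = \int_\Omega f'_\delta(u_L)\,dx,
$$
and the right-hand side is bounded by $|\Omega|\,\|f'_\delta\|_{L^\infty(\R)}\le C(\delta)$ thanks to the boundedness of $f'_{1,\delta}$ (and of $f_2'$ after the extension). The Poincar\'e--Wirtinger inequality then yields $\|\mu_L\|_{L^2(\Omega)}\le C\|\na\mu_L\|_{L^2(\Omega)}+C(\delta)$. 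Since $|h_+(u_L,v_L)|\le \|h_0\|_{L^\infty([0,1])}/4$, Cauchy--Schwarz and Young's inequality give
$$
  \bigg|\int_\Omega h_+(u_L,v_L)\mu_L\,dx\bigg|
  \le C\|\mu_L\|_{L^2(\Omega)}
  \le \frac{M(\delta)}{2}\|\na\mu_L\|_{L^2(\Omega)}^2 + C(\delta),
$$
so that half of the mobility term on the left absorbs the $\na\mu_L$ contribution.

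Finally, I would integrate the resulting differential inequality from $0$ to $t$, producing exactly the stated estimate. To conclude the $L$-independence of the right-hand side, I would note that $u_L(0)$ is the $L^2$-projection of $u^0$ onto $\operatorname{span}(\phi_1,\ldots,\phi_L)$; since the eigenfunctions are orthogonal in $H^1(\Omega)$, one has $\|\na u_L(0)\|_{L^2(\Omega)}\le\|\na u^0\|_{L^2(\Omega)}$, and $\|u_L(0)\|_{L^2(\Omega)}\le\|u^0\|_{L^2(\Omega)}$ together with the Lipschitz bound $|f_\delta(u)|\le|f_\delta(0)|+C(\delta)|u|$ controls $\int_\Omega f_\delta(u_L(0))\,dx$ by a constant depending only on $\delta$ and $u^0$. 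I expect the only delicate step to be the control of $\int h_+\mu_L$, which is why the mean-value trick via $\phi_1$ is essential; once this is in place, the argument is standard.
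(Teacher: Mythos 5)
Your proposal is correct and follows essentially the same route as the paper: test with $\mu_L$ and $\partial_t u_L$, use $M_\delta\ge M(\delta)$, control the mean of $\mu_L$ via the constant eigenfunction and the boundedness of $f'_\delta$, apply Poincar\'e--Wirtinger and Young to absorb the reaction term into half the mobility term, and integrate in time. Your added remark on the $L$-independence of the initial energy via the $H^1$-orthogonality of the spectral projection is a correct elaboration of what the paper leaves implicit.
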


\begin{proof}
We choose $\phi=\mu_L$ in \eqref{2.Gu} and $\phi=\pa_t u_L$ in \eqref{2.Gmu}:
\begin{align*}
  \int_\Omega\pa_t u_L\mu_L dx &= -\int_\Omega M_\delta(u_L)|\na\mu_L|^2 dx
	+ \int_\Omega h_+(u_L,v_L)\mu_L dx \\
	&\le -M(\delta)\int_\Omega|\na\mu_L|^2 dx + C\|\mu_L\|_{L^1(\Omega)}, \\
	\int_\Omega\mu_L\pa_t u_L dx &= \int_\Omega\na u_L\cdot\na\pa_t u_L dx
	+ \int_\Omega f'_\delta(u_L)\pa_t u_L dx \\
	&= \frac{d}{dt}\bigg(\frac12\int_\Omega|\na u_L|^2dx
	+ \int_\Omega f_\delta(u_L)dx\bigg),
\end{align*}
since $|h_+(u_L,v_L)|\le C$ because of our truncations. Here and in the following,
$C>0$ denotes a generic constant with values changing from line to line.
Equating both expressions and integrating over $(0,t)$ gives
\begin{align}\label{2.aux}
  \frac12\int_\Omega\|\na u_L(t)\|_{L^2(\Omega)}^2	&+ \int_\Omega f_\delta(u_L(t))dx
	\le \frac12\int_\Omega|\na u_L(0)|^2 dx + \int_\Omega f_\delta(u_L(0))dx \\
	&{}- M(\delta)\int_0^t\int_\Omega|\na\mu_L|^2dxds 
	+ C\int_0^t\|\mu_L\|_{L^1(\Omega)}ds. \nonumber 
\end{align}
The choice $\phi_1=\mbox{const.}$ in \eqref{2.Gmu} shows that
$$
  \bigg|\int_\Omega\mu_L dx\bigg| \le \int_\Omega |f'_\delta(u_L)|dx \le C(\delta).
$$
Set $\bar{\mu}_L=|\Omega|^{-1}\int_\Omega\mu_L dx$.
By the Poincar\'e--Wirtinger inequality, the previous estimate 
provides a bound for the $L^2(\Omega)$ norm of $\mu_L$:
$$
  \|\mu_L\|_{L^2(\Omega)} \le \|\mu_L-\bar{\mu}_L\|_{L^2(\Omega)}
	+ \|\bar{\mu}_L\|_{L^2(\Omega)} \le C_P\|\na\mu_L\|_{L^2(\Omega)} + C(\delta).
$$
Applying Young's inequality, we have
$$
  \int_0^t\|\mu_L\|_{L^1(\Omega)}ds
	\le C(\Omega)\int_0^t\|\mu_L\|_{L^2(\Omega)}ds
	\le \frac12M(\delta)\int_0^t\|\na\mu_L\|_{L^2(\Omega)}^2ds 
	+ C(\delta,\Omega,T).
$$
Inserting this estimate into \eqref{2.aux} finishes the proof.
\end{proof}

\begin{lemma}[Estimates for $u_L$ and $\mu_L$]\label{lem.estuL}
There exists $C(\delta)>0$ independent of $L$ such that
$$
  \|u_L\|_{L^\infty(0,T;H^1(\Omega))} + \|\mu_L\|_{L^2(0,T;H^1(\Omega))} \le C(\delta).
$$
\end{lemma}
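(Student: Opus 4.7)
The plan is to leverage Lemma \ref{lem.enerL} to extract gradient bounds and then supplement these with zero-frequency (mean value) estimates obtained by testing against the constant eigenfunction $\phi_1$, after which the Poincar\'e--Wirtinger inequality closes everything up.

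First, Lemma \ref{lem.enerL} immediately yields
$$
  \|\na u_L\|_{L^\infty(0,T;L^2(\Omega))}^2 + M(\delta)\|\na\mu_L\|_{L^2(0,T;L^2(\Omega))}^2 \le C(\delta),
$$
provided the singular contribution of $f_\delta$ is controlled. Since $f_{1,\delta}$ is bounded below uniformly in $u$ (away from the singularity after truncation) and $f_2$ is bounded, the term $\int_\Omega f_\delta(u_L(t))dx$ on the left is bounded below by a constant depending only on $\delta$, so it does not spoil the estimate. This handles the gradient contributions in both quantities, and divides the remaining task into the two mean-value estimates.

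Next, I would control the spatial mean $\bar u_L(t)=|\Omega|^{-1}\int_\Omega u_L dx$ by testing \eqref{2.Gu} with the constant eigenfunction $\phi=\phi_1$. Since $\na\phi_1=0$, the diffusion contribution vanishes and one obtains
$$
  \frac{d}{dt}\int_\Omega u_L dx = |\Omega|^{1/2}\int_\Omega h_+(u_L,v_L)\phi_1 dx,
$$
whose right-hand side is bounded uniformly in $L$ thanks to $|h_+|\le C$ from the truncation of the reaction term. Integrating in time and combining with Assumption (A2) on the initial datum yields $\|\bar u_L\|_{L^\infty(0,T)}\le C(T)$. Poincar\'e--Wirtinger then upgrades the gradient bound on $u_L$ to the full $H^1$ bound stated in the lemma.

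For the mean of $\mu_L$, I would reuse the computation already carried out in the proof of Lemma \ref{lem.enerL}: testing \eqref{2.Gmu} against $\phi_1$ gives
$$
  \bigg|\int_\Omega\mu_L dx\bigg| \le \int_\Omega|f_\delta'(u_L)|dx \le C(\delta),
$$
since $|f_\delta'|\le C(\delta)$ by construction (see \eqref{2.f1pp} and the subsequent remark). A second application of Poincar\'e--Wirtinger then gives
$$
  \|\mu_L\|_{L^2(\Omega)} \le C_P\|\na\mu_L\|_{L^2(\Omega)} + C(\delta),
$$
and squaring and integrating in time delivers the desired bound on $\|\mu_L\|_{L^2(0,T;H^1(\Omega))}$. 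The whole argument is essentially a repackaging of Lemma \ref{lem.enerL}; the only mildly delicate point is checking that the truncation of $f_\delta'$ produces a constant independent of $L$ (but depending on $\delta$), which is immediate from \eqref{2.f1pp}. I do not expect any substantive obstacle here, as the constants are allowed to blow up as $\delta\to 0$.
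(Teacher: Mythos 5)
Your proof is correct and follows essentially the same route as the paper: extract the gradient bounds from Lemma \ref{lem.enerL} (using that $f_\delta$ is bounded below), control the mean of $u_L$ by testing \eqref{2.Gu} with the constant eigenfunction, control the mean of $\mu_L$ by testing \eqref{2.Gmu} with the constant (as already done inside the proof of Lemma \ref{lem.enerL}), and conclude with the Poincar\'e--Wirtinger inequality. The paper's version is merely terser, citing the proof of Lemma \ref{lem.enerL} for the $\mu_L$ bounds rather than repeating the Poincar\'e--Wirtinger step.
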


\begin{proof}
The proof of Lemma \ref{lem.enerL} shows that $(\na\mu_L)$ and $(\mu_L)$ are bounded
in $L^2(\Omega_T)$ and that $(\na u_L)$ is bounded in $L^\infty(0,T;L^2(\Omega))$.
We choose $\phi_1=\mbox{const.}$ in \eqref{2.Gu}:
$$
  \frac{d}{dt}\int_\Omega u_L dx = \int_\Omega h_+(u_L,v_L)dx \le C(\Omega).
$$
Consequently, $\int_\Omega u_L(t)dx$ is uniformly bounded,
at least on finite time intervals.
This allows us to apply the Poincar\'e--Wirtinger inequality to deduce an $L^2(\Omega)$
bound for $u_L(t)$ uniformly in time.
\end{proof}

We also need a priori estimates for the substrate concentration.

\begin{lemma}[Estimates for $v_L$]\label{lem.estvL}
There exists $C(v^0)>0$ only depending on the initial datum $v^0$ such that
$$
  \|v_L\|_{L^\infty(0,T;L^2(\Omega))} + \|D_+(u_L)^{1/2}\na v_L\|_{L^2(\Omega_T)}
	+ \kappa^{1/2}\|\na v^L\|_{L^2(\Omega_T)} \le C(v^0).
$$
\end{lemma}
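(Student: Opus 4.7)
The plan is to carry out the standard $L^2$ energy estimate for the substrate equation by testing the Galerkin system \eqref{2.Gv} with $\phi=v_L$ itself. This choice is admissible because $v_L\in\operatorname{span}(\phi_1,\ldots,\phi_L)$. The time-derivative contribution becomes $\tfrac12\frac{d}{dt}\|v_L\|_{L^2(\Omega)}^2$; the diffusion term produces $\int_\Omega(D_+(u_L)+\kappa)|\na v_L|^2 dx$, which is nonnegative and will be kept on the left-hand side; and the reaction term yields $\int_\Omega g_+(u_L,v_L)v_L dx$.

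The next step is to control the reaction contribution. Thanks to the truncations in $g_+$, we have $|g_+(u_L,v_L)|=|[u_L]_+^1 g_0([v_L]_+^1)|\le \|g_0\|_{L^\infty([0,1])}$, so Cauchy--Schwarz together with Young's inequality gives
$$\left|\int_\Omega g_+(u_L,v_L)v_L dx\right|\le C\bigl(1+\|v_L\|_{L^2(\Omega)}^2\bigr),$$
with $C$ depending only on $\|g_0\|_{L^\infty([0,1])}$ and $|\Omega|$. Inserting this into the energy identity and applying Gronwall's lemma produces the $L^\infty(0,T;L^2(\Omega))$ bound on $v_L$ in terms of $\|v^0\|_{L^2(\Omega)}$, $T$, $|\Omega|$, and $\|g_0\|_{L^\infty([0,1])}$. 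Integrating the resulting differential inequality over $(0,T)$ then yields the $L^2(\Omega_T)$ bounds for $D_+(u_L)^{1/2}\na v_L$ and $\kappa^{1/2}\na v_L$ simultaneously, since $D_+(u_L)+\kappa$ dominates both $D_+(u_L)$ and $\kappa$ separately.

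I do not expect a substantial obstacle here: the truncations render the source uniformly bounded, which decouples the estimate entirely from any sign or $L^\infty$ information on $v_L$ (such information is not yet available at the Galerkin level and will have to be established later, once we test with $[v_L-1]_+$ and $[v_L]_-$). The only point worth emphasizing is that although the resulting bound is independent of the Galerkin dimension $L$ and of $\delta$, the control of $\na v_L$ is only of order $\kappa^{-1/2}$, so the full $H^1$-estimate degenerates in the limit $\kappa\to 0$. This is consistent with the observation made in the introduction that, in the limit, one cannot expect strong convergence of $v_\delta$ but must exploit the rewriting $(1-u)\na v=\na((1-u)v)+v\na u$ together with the Minty--Browder argument.
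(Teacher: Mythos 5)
Your proposal is correct and follows the same basic route as the paper: test \eqref{2.Gv} with $\phi=v_L$, keep the nonnegative diffusion term $\int_\Omega(D_+(u_L)+\kappa)|\na v_L|^2dx$ on the left, and control the reaction contribution. The one place you diverge is in how that reaction term is handled. The paper observes that it has a favorable sign: since $g_0$ is nondecreasing with $g_0(0)=0$, one has $g_0([v_L]_+^1)\ge 0$, and $g_0([v_L]_+^1)\,v_L\ge 0$ pointwise (wherever $v_L<0$ the truncation gives $[v_L]_+^1=0$ and hence $g_0([v_L]_+^1)=0$), so $\int_\Omega g_+(u_L,v_L)v_L\,dx=-\int_\Omega[u_L]_+^1g_0([v_L]_+^1)v_L\,dx\le 0$ and the term can simply be dropped; a single integration over $(0,T)$ then finishes the proof with a constant depending only on $\|v^0\|_{L^2(\Omega)}$, as the lemma asserts. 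Your version instead bounds $|g_+|$ by $\|g_0\|_{L^\infty([0,1])}$ and invokes Gronwall, which is perfectly valid and still uniform in $L$, $\delta$, and $\kappa$ (all that is used downstream), but it yields a constant that additionally depends on $T$, $|\Omega|$, and $g_0$ — strictly weaker than the stated ``$C(v^0)$ only depending on $v^0$''. A small side remark: the sign/upper-bound information on $v$ is indeed obtained later in the paper, but by testing the limiting equation \eqref{2.appv} with $[v]_-$ and $[v-1]_+$, not the Galerkin system — those truncations are not admissible test functions in $\operatorname{span}(\phi_1,\ldots,\phi_L)$, so your parenthetical about testing with $[v_L]_-$ and $[v_L-1]_+$ should be read at the post-Galerkin level.
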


\begin{proof}
We choose the test function $\phi=v_L$ in \eqref{2.Gv} and take into account that $g_0(0)=0$:
$$
  \frac12\frac{d}{dt}\int_\Omega v_L^2 dx
	+ \int_\Omega (D_+(u_L)+\kappa)|\na v_L|^2 dx 
	= -\int_\Omega[u_L]_+^1g_0([v_L]_+^1) v_Ldx \le 0.
$$
An integration over $(0,T)$ yields the result.
\end{proof}

The uniform estimates for $u_L$ in $L^\infty(0,T;H^1(\Omega))$ and $v_L$ in
$L^\infty(0,T;L^2(\Omega))$ show that the coefficients $(A_\ell)$ and $(B_\ell)$
are bounded in $(0,T)$. Thus, we infer the global existence of solutions
to the Galerkin system \eqref{2.Gv}--\eqref{2.Gmu}. To pass to the limit $L\to\infty$,
we need an estimate for the time derivatives.

\begin{lemma}[Estimates for the time derivatives]\label{lem.timeL}
There exist $C_1(\delta)>0$ depending on $\delta$ and $C_2>0$ independent of $\delta$
such that
$$
  \|\pa_t u_L\|_{L^2(0,T;H^1(\Omega)')} \le C_1(\delta), \quad
	\|\pa_t v_L\|_{L^2(0,T;H^1(\Omega)')} \le C_2.
$$
\end{lemma}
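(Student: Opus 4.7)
The plan is to test the Galerkin equations \eqref{2.Gu} and \eqref{2.Gv} against projections of arbitrary $\phi\in H^1(\Omega)$ onto the Galerkin space. The Neumann eigenfunctions $(\phi_\ell)$ are orthogonal in both $L^2(\Omega)$ and $H^1(\Omega)$, since integration by parts gives $(\phi_\ell,\phi_m)_{H^1}=(1+\lambda_\ell)\delta_{\ell m}$. Consequently, the $L^2$-orthogonal projection $P_L\colon H^1(\Omega)\to\operatorname{span}(\phi_1,\ldots,\phi_L)$ is a contraction in $H^1(\Omega)$. Since $\pa_t u_L$ and $\pa_t v_L$ lie in the Galerkin space, $\langle\pa_t u_L,\phi\rangle_{H^1(\Omega)',H^1(\Omega)} = \int_\Omega \pa_t u_L\,P_L\phi\,dx$ and likewise for $v_L$, which reduces the duality estimate to bounding the right-hand sides of \eqref{2.Gu} and \eqref{2.Gv} evaluated at $P_L\phi$.

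For $\pa_t u_L$, observe that $\|M_\delta\|_{L^\infty(\R)}\le 1/4$ and $|h_+(u_L,v_L)|\le C$ hold by construction of the truncations. Cauchy--Schwarz and $\|P_L\phi\|_{H^1(\Omega)}\le\|\phi\|_{H^1(\Omega)}$ give
$$
  |\langle\pa_t u_L,\phi\rangle|
  \le \tfrac14 \|\na\mu_L\|_{L^2(\Omega)}\|\na P_L\phi\|_{L^2(\Omega)} + C\|P_L\phi\|_{L^1(\Omega)}
  \le C\big(\|\na\mu_L\|_{L^2(\Omega)}+1\big)\|\phi\|_{H^1(\Omega)}.
$$
Squaring, integrating in time and invoking the bound $\|\na\mu_L\|_{L^2(\Omega_T)}\le C(\delta)$ from Lemmas \ref{lem.enerL}--\ref{lem.estuL} yields $\|\pa_t u_L\|_{L^2(0,T;H^1(\Omega)')}\le C_1(\delta)$. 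The $\delta$-dependence enters through the factor $M(\delta)^{-1}$ in the energy estimate and cannot be avoided at this stage.

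For $\pa_t v_L$, the source $g_+$ is bounded uniformly in all parameters, and the essential point is to split the diffusion term to match the weighted estimate of Lemma \ref{lem.estvL}. Using $0\le D_+(u_L)\le 1$,
$$
  \left|\int_\Omega (D_+(u_L)+\kappa)\na v_L\cdot\na P_L\phi\,dx\right|
  \le \big(\|D_+(u_L)^{1/2}\na v_L\|_{L^2(\Omega)}+\kappa^{1/2}\|\kappa^{1/2}\na v_L\|_{L^2(\Omega)}\big)\|\na P_L\phi\|_{L^2(\Omega)}.
$$
By Lemma \ref{lem.estvL}, both weighted $L^2(\Omega_T)$-norms are bounded by $C(v^0)$ independently of $\delta$ and $L$. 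Squaring and integrating in time, together with the uniform bound on $g_+$, gives $\|\pa_t v_L\|_{L^2(0,T;H^1(\Omega)')}^2\le C(1+\kappa)$, which furnishes the $\delta$-independent constant $C_2$ since $\kappa$ stays in a bounded range (and will later be sent to $0$).

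The main subtlety is the bookkeeping of $\delta$-dependence in the $v$-equation: a naive bound through $\|\na v_L\|_{L^2(\Omega)}$ alone would degenerate like $\kappa^{-1/2}$, which is harmless here but the weighted splitting above is crucial because it exploits the only estimate on $\na v_L$ that remains valid after the limit $\kappa\to 0$. The $u$-estimate is straightforward but is unavoidably $\delta$-dependent because it relies on $\|\na\mu_L\|_{L^2(\Omega_T)}$, a quantity whose bound deteriorates with the truncation parameter.
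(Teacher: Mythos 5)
Your proposal is correct and follows essentially the same route as the paper: test against the $H^1$-stable Galerkin projection of an arbitrary $\psi\in L^2(0,T;H^1(\Omega))$, bound the mobility term via $\|\na\mu_L\|_{L^2(\Omega_T)}\le C(\delta)$ from Lemma \ref{lem.enerL}, and bound the degenerate diffusion term via the weighted estimates $\|D_+(u_L)^{1/2}\na v_L\|_{L^2(\Omega_T)}+\kappa^{1/2}\|\na v_L\|_{L^2(\Omega_T)}\le C$ of Lemma \ref{lem.estvL}, with the truncated reaction terms bounded pointwise. Your explicit splitting of the $\kappa$-regularization and the remark on where the $\delta$-dependence enters are consistent with (and slightly more detailed than) the paper's argument.
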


\begin{proof}
Let $\psi\in L^2(0,T;H^1(\Omega))$ and let $\Pi_L\psi$ be the projection of $\psi$ 
on $\operatorname{span}(\phi_1,\ldots,\phi_L)$. We infer from \eqref{2.Gv} and
the bounds of Lemma \ref{lem.estvL} that
\begin{align*}
  \bigg|\int_0^T&\int_\Omega\pa_t v_L\psi dxdt\bigg|
	= \bigg|\int_0^T\int_\Omega\pa_t v_L\Pi_L\psi dxdt\bigg| \\
	&\le \int_0^T\|D_+(u_L)^{1/2}\|_{L^\infty(\Omega)}
	\|D_+(u_L)^{1/2}\na v_L\|_{L^2(\Omega)}\|\na\Pi_L\psi\|_{L^2(\Omega)}dt \\
	&\phantom{xx}{}+ \int_0^t\|g_+(u_L,v_L)\|_{L^2(\Omega)}\|\Pi_L\psi\|_{L^2(\Omega)}dt
	\le C_2\|\psi\|_{L^2(0,T;H^1(\Omega))}.
\end{align*}
Furthermore, using $M_\delta(u_L)\le C_M$ and the bounds of Lemma \ref{lem.estuL}, 
\begin{align*}
  \bigg|\int_0^T&\int_\Omega\pa_t u_L\psi dxdt\bigg|
	= \bigg|\int_0^T\int_\Omega\pa_t u_L\Pi_L\psi dxdt\bigg| \\
	&\le C_M\int_0^T\|\na\mu_L\|_{L^2(\Omega)}\|\na\Pi_L\psi\|_{L^2(\Omega)}dt \\
	&\phantom{xx}{}+ \int_0^T\|h_+(u_L,v_L)\|_{L^2(\Omega)}\|\Pi_L\psi\|_{L^2(\Omega)}dt
	\le C_1(\delta)\|\psi\|_{L^2(0,T;H^1(\Omega))}.
\end{align*}
This concludes the proof.
\end{proof}

The estimates of Lemmas \ref{lem.estuL}--\ref{lem.timeL} allow us to apply the
Aubin--Lions lemma \cite[Corollary 4]{Sim87} to find subsequences (not relabeled) 
such that, as $L\to\infty$,
\begin{align*}
  u_L\to u, \quad v_L\to v &\quad\mbox{strongly in }L^2(\Omega_T), \\
	u_L\rightharpoonup u,\quad v_L\rightharpoonup v, \quad \mu_L\rightharpoonup\mu 
	&\quad\mbox{weakly in }L^2(0,T;H^1(\Omega)), \\
	\pa_t u_L\rightharpoonup \pa_t u, \quad \pa_t v_L\rightharpoonup \pa_t v
	&\quad\mbox{weakly in }L^2(0,T;H^1(\Omega)').
\end{align*}
Since $M_\delta$, $D_+$, $f'_\delta$, $g_+$, and $h_+$ are bounded functions, we have
\begin{align*}
  & M_\delta(u_L)\to M_\delta(u), \quad D_+(u_L)\to D_+(u), \quad
	f'_\delta(u_L)\to f'_\delta(u), \\
	& h_+(u_L,v_L)\to h_+(u,v), \quad g_+(u_L,v_L)\to g_+(u,v)
	\quad\mbox{strongly in }L^2(\Omega_T).
\end{align*}
Thus, we can perform the limit $L\to\infty$ in the Galerkin system 
\eqref{2.Gv}--\eqref{2.Gmu}, which yields the existence of a solution
$(u,v,\mu)$ to
\begin{align}
  \int_0^t\langle\pa_t u,\phi_1\rangle ds
	&= -\int_0^t\int_\Omega M_\delta(u)\na\mu\cdot\na\phi_1 dxds
	+ \int_0^t\int_\Omega h_+(u,v)\phi_1 dxds, \label{2.appu} \\
	\int_0^t\langle\pa_t v,\phi_2\rangle ds
	&= -\int_0^t\int_\Omega (D_+(u)+\kappa)\na v\cdot\na\phi_2 dxds
  + \int_0^t\int_\Omega g_+(u,v)\phi_2 dxds, \label{2.appv} \\
	\int_0^t\int_\Omega\mu\phi_3 dxds &= \int_0^t\int_\Omega\na u\cdot\na\phi_3 dxds
	+ \int_0^t\int_\Omega f'_\delta(u)\phi_3 dxds \label{2.appmu}
\end{align}
for all $\phi_i\in L^2(0,T;H^1(\Omega))$, $i=1,2,3$, and all $0<t<T$, recalling that $\langle\cdot,\cdot\rangle$ is the dual product between $H^1(\Omega)'$ and $H^1(\Omega)$. 


\section{Uniform estimates}\label{sec.unif}

We need some estimates uniform in $\delta$ and $\kappa$ as well as lower and upper bounds to remove the truncation.

\begin{lemma}[Uniform estimates for $v$]\label{lem.estv}
There exists $C(v^0)>0$ only depending on $v^0$ such that
$$
  \|v\|_{L^\infty(0,T;L^2(\Omega))} + \|D_+(u)^{1/2}\na v\|_{L^2(\Omega_T)}
	+ \kappa^{1/2}\|\na v\|_{L^2(\Omega_T)} \le C(v^0).
$$
Furthermore, it holds that $0\le v(t)\le 1$ in $\Omega$ for $0<t<T$.
\end{lemma}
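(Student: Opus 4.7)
The plan is to derive all three bounds and the pointwise bounds by choosing suitable test functions in the weak formulation \eqref{2.appv} obtained at the end of Section \ref{sec.galer}. The admissibility of these test functions is guaranteed by the fact that, for $\kappa>0$ fixed, Lemma \ref{lem.estvL} and weak convergence yield $v\in L^2(0,T;H^1(\Omega))$, while $\pa_t v\in L^2(0,T;H^1(\Omega)')$ provides the chain-rule identity in the Bochner sense.

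First I would establish the energy bound by choosing $\phi_2=v$ in \eqref{2.appv}. The time term becomes $\tfrac{1}{2}\|v(t)\|_{L^2(\Omega)}^2-\tfrac{1}{2}\|v^0\|_{L^2(\Omega)}^2$, the diffusion term yields exactly $\int_0^t\int_\Omega(D_+(u)+\kappa)|\na v|^2 dx\,ds$, and the reaction contribution is nonpositive: on $\{v\ge 0\}$ the factor $-[u]_+^1 g_0([v]_+^1)\,v$ is a product of a nonpositive sign with three nonnegative factors (using (A3) and $g_0\ge g_0(0)=0$), while on $\{v<0\}$ one has $[v]_+^1=0$ and hence $g_0([v]_+^1)=0$. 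Rearranging yields all three claimed norms bounded by $\tfrac12\|v^0\|_{L^2(\Omega)}^2$.

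Next, to prove $v\ge 0$, I would apply a Stampacchia-type truncation: test with $\phi_2=\min(v,0)\in L^2(0,T;H^1(\Omega))$. Because the composition with the Lipschitz function $s\mapsto\min(s,0)$ preserves $H^1$, and because $v^0\ge 0$, the chain rule gives
\begin{equation*}
\int_0^t\langle\pa_t v,\min(v,0)\rangle ds=\tfrac12\|\min(v(t),0)\|_{L^2(\Omega)}^2.
\end{equation*}
The diffusion term is nonnegative, and the reaction term $-\int_0^t\int_\Omega[u]_+^1 g_0([v]_+^1)\min(v,0)\,dx\,ds$ vanishes identically: on $\{v\ge 0\}$ one has $\min(v,0)=0$; on $\{v<0\}$ the factor $g_0([v]_+^1)=g_0(0)=0$. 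This forces $\min(v,0)\equiv 0$, i.e.\ $v\ge 0$. Symmetrically, testing with $\phi_2=(v-1)_+$ (which has zero initial trace since $v^0\le 1$) gives $\tfrac12\|(v(t)-1)_+\|_{L^2(\Omega)}^2$ plus a nonnegative diffusion term on the left; on $\{v>1\}$ the reaction term reduces to $-\int [u]_+^1 g_0(1)(v-1)_+ dx\le 0$, so $(v-1)_+\equiv 0$ and $v\le 1$.

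The only delicate point is justifying the chain rule for the truncations $\min(v,\cdot)$ and $(\cdot-1)_+$ in the $H^1$/$H^1{}'$ duality setting; this is standard (the maps are $1$-Lipschitz and $v\in C^0([0,T];L^2(\Omega))$ by the Lions--Magenes lemma), but it is the one place where I would have to be careful. Once this is in hand, the remaining algebra is genuinely routine because the truncations $[u]_+^1$ and $[v]_+^1$ in the definition of $g_+$ were designed precisely so that the reaction term has the right sign on each of the sets $\{v<0\}$ and $\{v>1\}$.
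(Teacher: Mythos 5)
Your proof is correct and follows essentially the same route as the paper: the energy bounds come from testing \eqref{2.appv} with $v$ exactly as in Lemma \ref{lem.estvL}, and the pointwise bounds come from the test functions $[v]_-=\min\{0,v\}$ and $[v-1]_+$, using $g_0(0)=0$, the monotonicity of $g_0$, and the truncations built into $g_+$ to control the sign of the reaction term. The chain-rule/admissibility issues you flag are handled implicitly in the paper in the same standard way.
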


Because of the lower and upper bounds for $v$, we can remove the truncation in
$g_+(u,v)=-[u]_+^1g_0(v)$ and $h_+(u,v)=[u]_+[1-u]_+h_0(v)$. 

\begin{proof}
We start with the lower and upper bounds for $v$. We use the test function $[v]_-=\min\{0,v\}$
in \eqref{2.appv} and use the assumption $v(0)\ge 0$ in $\Omega$:
$$
  \int_\Omega[v(t)]_-^2 dx
	+ \int_0^t\int_\Omega(D_+(u)+\kappa)|\na[v]_-|^2 dxds
	= \int_0^t\int_\Omega g_+(u,v)[v]_- dxds = 0,
$$
since $g_0(0)=0$ implies that $g_+(u,v)[v]_-=-[u]_+^1g_0(0)[v]_-=0$.
This implies that $v(t)\ge 0$ in $\Omega$, $t>0$. The property $v(t)\le 1$ is proved
in a similar way using the test function $[v-1]_+$ and the fact that
$g_+(u,v)[v-1]_+=-[u]_+^1g_0(1)[v-1]_+ \le 0$.
The remaining estimates can be shown as in Lemma \ref{lem.estvL}.
\end{proof}

Next, we show some uniform estimates for $u$. For this, we introduce the entropy density
\begin{equation}\label{2.Phi}
  \Phi_\delta(u) = \int_{1/2}^u\int_{1/2}^s \frac{drds}{M_\delta(r)}\ge 0.
\end{equation}

\begin{lemma}[Energy and entropy estimates]\label{lem.energy}
There exists $C(T)>0$ independent of $\delta$ and $\kappa$ such that for all $t>0$
and all sufficiently small $\delta>0$,
\begin{align}\label{2.energy}
  \sup_{0<t<T}\int_\Omega\bigg(\frac12|\na u(t)|^2dx + f_\delta(u(t))\bigg)dx
	+ \int_0^T\int_\Omega M_\delta(u)|\na\mu|^2 dxds &\le C(T), \\
	\sup_{0<t<T}\int_\Omega\Phi_\delta(u(t))dx + \int_0^T\int_\Omega(\Delta u)^2dxds &\le C(T). \label{2.entropy}
\end{align}
\end{lemma}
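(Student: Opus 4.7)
The plan is to derive the two identities \eqref{2.energy}--\eqref{2.entropy} formally sketched in Section \ref{sec.main} at the level of the approximate solution $(u,v,\mu)$ produced by Section \ref{sec.galer}, and then to bound each right-hand side in a way \emph{independent} of $\delta$ and $\kappa$. The energy identity will come from testing \eqref{2.appu} with $\phi=\mu$ and \eqref{2.appmu} with $\phi=\pa_t u$; the entropy identity from testing \eqref{2.appu} with $\phi=\Phi_\delta'(u)$, which is admissible in $L^2(0,T;H^1(\Omega))$ because $\Phi_\delta''=1/M_\delta\le 1/M(\delta)$ makes $\Phi_\delta'$ Lipschitz. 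To legitimate the chain rules
$$
\langle\pa_t u,\mu\rangle = \frac{d}{dt}\bigg(\frac12\|\na u\|_{L^2(\Omega)}^2 + \int_\Omega f_\delta(u)dx\bigg),\qquad
\langle\pa_t u,\Phi_\delta'(u)\rangle = \frac{d}{dt}\int_\Omega\Phi_\delta(u)dx,
$$
I would perform the computations at the Galerkin level (where everything is smooth in time) and then pass to the limit $L\to\infty$, obtaining the identities as inequalities by weak lower semicontinuity of the dissipation terms. Elliptic regularity applied to \eqref{2.appmu}, combined with the boundedness of $f_\delta'(u)$, gives $u\in L^2(0,T;H^2(\Omega))$, so that $\mu=-\Delta u+f_\delta'(u)$ holds in $L^2(\Omega_T)$.

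For the \textbf{energy estimate}, combining the two tests yields
$$
\frac{d}{dt}\bigg(\frac12\|\na u\|_{L^2(\Omega)}^2 + \int_\Omega f_\delta(u)dx\bigg) + \int_\Omega M_\delta(u)|\na\mu|^2 dx = \int_\Omega h_+(u,v)\mu\, dx,
$$
and I would treat the right-hand side by substituting $\mu=-\Delta u+f_\delta'(u)$ and integrating by parts (using $\na u\cdot\nu=0$):
$$
\int_\Omega h_+\mu\, dx = \int_\Omega \chi_{\{0<u<1\}}(1-2u)h_0(v)|\na u|^2 dx + \int_\Omega [u]_+[1-u]_+h_0'(v)\na v\cdot\na u\, dx + \int_\Omega h_+f_\delta'(u)\, dx.
$$
The first integral is bounded by $C\|\na u\|_{L^2(\Omega)}^2$ since its integrand is pointwise bounded. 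For the second, Young's inequality combined with the elementary pointwise bound $[u]_+^2[1-u]_+^2\le D_+(u)$ on $\R$ (verified by case distinction $u\le 0$, $0\le u\le 1$, $u\ge 1$) yields control by $C\|\na u\|_{L^2(\Omega)}^2 + C\int_\Omega D_+(u)|\na v|^2 dx$, the latter being $\delta,\kappa$-independent by Lemma \ref{lem.estv}. The last term is uniformly bounded in $\delta$: using the explicit form \eqref{2.f1pp} of $f_{1,\delta}''$, a region-by-region argument on $\{u\le\delta\}$, $\{\delta<u<1-\delta\}$, $\{u\ge 1-\delta\}$ shows $|[u]_+[1-u]_+f_\delta'(u)|\le C$ independently of $\delta$, while $h_+\equiv 0$ outside $[0,1]$. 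Provided $\delta$ is small enough that $[u_*,u^*]\subset[\delta,1-\delta]$ (so that $\int f_\delta(u^0)dx=\int f(u^0)dx$), Gronwall's inequality together with the $\delta$-uniform lower bound $\int f_\delta(u)dx\ge -C$ then closes \eqref{2.energy}.

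For the \textbf{entropy estimate}, using $M_\delta\Phi_\delta''=1$ and integrating by parts leads to
$$
\frac{d}{dt}\int_\Omega\Phi_\delta(u)dx + \int_\Omega(\Delta u)^2 dx + \int_\Omega f_\delta''(u)|\na u|^2 dx = \int_\Omega h_+(u,v)\Phi_\delta'(u)dx.
$$
The bound $f_\delta''\ge f_{1,\delta}''-2\lambda\ge -2\lambda$ from \eqref{2.f1pp} makes the third integral on the left bounded below by $-2\lambda\|\na u\|_{L^2(\Omega)}^2$, which is absorbed by the energy estimate already proved. On the right-hand side, the same cancellation mechanism (the polynomial factor $[u]_+[1-u]_+$ in $h_+$ killing the logarithmic blow-up of $\Phi_\delta'(u)$ near $u\in\{0,1\}$) yields a uniform-in-$\delta$ bound. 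Gronwall's inequality then gives \eqref{2.entropy}, using that $\Phi_\delta(u^0)$ is $\delta$-uniformly bounded since $u^0$ is bounded away from $0$ and $1$.

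The \textbf{main obstacle} is the uniform-in-$\delta$ control of the cancellation integrals $\int h_+f_\delta'(u)dx$ and $\int h_+\Phi_\delta'(u)dx$: on the transition pieces $[-1,\delta]$ and $[1-\delta,2]$ the extended derivatives $f_\delta'$ and $\Phi_\delta'$ grow like $|\log\delta|$, and one must exploit that $[u]_+$ (respectively $[1-u]_+$) is at most $O(\delta)$ on the relevant piece, so that the product is $O(\delta|\log\delta|)=o(1)$. A secondary technical subtlety is the rigorous justification of the chain-rule identities at the limit level, which I would sidestep by carrying out every computation at the Galerkin level and invoking weak lower semicontinuity of the dissipations after passing $L\to\infty$.
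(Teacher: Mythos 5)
Your proposal follows essentially the same route as the paper: test the limit equation \eqref{2.appu} with $\mu$ (resp.\ $\Phi_\delta'(u)$), substitute $\mu=-\Delta u+f_\delta'(u)$, and control the two cancellation integrals $\int h_+ f_\delta'(u)\,dx$ and $\int h_+\Phi_\delta'(u)\,dx$ by the same region-by-region argument exploiting that $[u]_+[1-u]_+=O(\delta)$ where $f_\delta'$ and $\Phi_\delta'$ are $O(|\log\delta|)$, before closing with Gronwall and Lemma \ref{lem.estv}; all the estimates you outline are correct and $\delta,\kappa$-uniform. One caveat on your justification of the chain rules: the entropy computation cannot literally be ``performed at the Galerkin level,'' since $\Phi_\delta'(u_L)$ is not in $\operatorname{span}(\phi_1,\dots,\phi_L)$ (and likewise $h_+(u_L,v_L)$ is not an admissible test function for \eqref{2.Gmu}, so $\mu_L$ cannot be replaced by $-\Delta u_L+f_\delta'(u_L)$ in $\int h_+\mu_L\,dx$); the paper instead derives both identities after the limit $L\to\infty$, using the elliptic regularity $u\in L^2(0,T;H^2(\Omega))$ (indeed $H^3$) from \eqref{2.appmu} together with the Lipschitz continuity of $f_\delta'$ and $\Phi_\delta'$ -- ingredients you already have in place, so the fix is immediate.
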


Since $f_\delta$ is bounded from below (by construction), the energy inequality
provides uniform bounds for $u$. 

\begin{proof}
We first prove the energy inequality and then the entropy inequality.

{\em Step 1: Energy inequality.} 
We know from Section \ref{sec.galer} that $u\in L^\infty(0,T;H^1(\Omega))$ and
$\mu\in L^2(0,T;H^1(\Omega))$. Then we infer from the boundedness of $f'_\delta$ that
$\Delta u = f'_\delta(u)-\mu\in L^2(0,T;L^2(\Omega))$.
By elliptic regularity theory, $u\in L^2(0,T;H^2(\Omega))$. 
Moreover, $\na\Delta u = f''_\delta(u)\na u-\na\mu
\in L^2(\Omega_T)$, which implies that $u\in L^2(0,T;H^3(\Omega))$
(this regularity is not uniform in $(\delta,\kappa)$).
Consequently, $\Delta u\in L^2(0,T;H^1(\Omega))$ and
\begin{align*}
  0 &= \int_0^t\langle\pa_t u,\mu+\Delta u-f'_\delta(u)\rangle ds \\
	&= \int_0^t\langle\pa_t u,\mu\rangle ds - \frac12\int_\Omega(|\na u(t)|^2-|\na u(0)|^2)ds
	- \int_\Omega(f_\delta(u(t))-f_\delta(u(0))ds.
\end{align*}
On the other hand, we use $\phi_2=\mu\in L^2(0,T;H^1(\Omega))$
as a test function in \eqref{2.appu}:
$$
  \int_0^t\langle\pa_t u,\mu\rangle ds + \int_0^t\int_\Omega M_\delta(u)|\na\mu|^2 dxds
	= \int_0^t\int_\Omega h_+(u,v)\mu dx.
$$
This shows that, using the definition of $\mu$,
\begin{align}\label{2.aux2}
  \frac12\int_\Omega&|\na u(t)|^2 dx + \int_\Omega f_\delta(u(t))dx
	+ \int_0^t\int_\Omega M_\delta(u)|\na\mu|^2 dxds
	= \frac12\int_\Omega|\na u^0|^2 dx \\
	&{}+ \int_\Omega f_\delta(u^0)dx
	+ \int_0^t\int_\Omega\na h_+(u,v)\cdot\na u dxds 
	+ \int_0^t\int_\Omega h_+(u,v)f'_\delta(u)dxds. \nonumber
\end{align}

It remains to estimate the last two integrals. For the last but one integral, we
insert the definition of $h_+(u,v)$ and apply Young's inequality:
\begin{align*}
  \int_0^t&\int_\Omega\na h_+(u,v)\cdot\na u dxds \\
	&= \int_0^t\int_\Omega\mathrm{1}_{\{0<u<1\}}\big((1-2u)h_0(v)|\na u|^2
	+ u(1-u)h'_0(v)\na v\cdot\na u\big)dxds \\
	&\le C\int_0^t\int_\Omega|\na u|^2 dxds + C\int_0^t\int_\Omega\mathrm{1}_{\{0<u<1\}}
	(1-u)|\na v|^2 dxds \\
	&\le C\int_0^t\int_\Omega|\na u|^2 dxds + C,
\end{align*}
where the last step follows from Lemma \ref{lem.estv}, and $C>0$ denotes here and in the following a constant independent of $\delta$
and $\kappa$. 

For the last integral in \eqref{2.aux2}, we observe that the function
$s\mapsto -s(1-s)(N^{-1}\log s-\log(1-s)+N^{-1}-1)$ is bounded in $[0,1]$.
We insert the definition of $f'_\delta(u)$ and distinguish three cases.
First, let $u\le\delta$. Then
\begin{align*}
  h_+(u,v)f'_\delta(u) &= [u]_+[1-u]_+\big(f'_\delta(\delta) 
	+ f''_1(\delta)(u-\delta) + \lambda(1-2u)\big) \\
	&= [u]_+[1-u]_+\bigg(\frac{1}{N}\log\delta - \log(1-\delta) + \frac{1}{N}-1\bigg) \\
	&\phantom{xx}{}+[u]_+[1-u]_+(u-\delta)\bigg(\frac{1}{N\delta} + \frac{1}{1-\delta}\bigg) 
	+ \lambda[u]_+[1-u]_+(1-2u) \\
	&\le \delta(1-\delta)\bigg(|\log(1-\delta)| + \frac{1}{N}\bigg)
	+ \lambda\delta(1-\delta) \le C,
\end{align*}
using $[u]_+[1-u]_+\le \delta(1-\delta)$ and $u-\delta\le 0$. 
Second, let $\delta<u<1-\delta$. We have
\begin{align*}
  h_+(u,v)f'_\delta(u) = u(1-u)\bigg(\frac{1}{N}\log u-\log(1-u)+\frac{1}{N}-1\bigg) 
	+ \lambda u(1-u)(1-2u) \le C,
\end{align*}
since $z\mapsto z\log z$ is bounded in $[0,1]$.
Finally, let $u\ge 1-\delta$ (and $\delta\le 1/2$). We obtain
\begin{align*}
  h_+(u,v)f'_\delta(u) &= [u]_+[1-u]_+\bigg(\frac{1}{N}\log(1-\delta) - \log\delta
	+ \frac{1}{N}-1\bigg) \\
	&\phantom{xx}{}+[u]_+[1-u]_+(u-\delta)\bigg(\frac{1}{N(1-\delta)}+\frac{1}{\delta}\bigg) 
	+ \lambda[u]_+[1-u]_+(1-2u) \\
	&\le \delta(1-\delta)\bigg(|\log\delta|+\frac{1}{N}\bigg) + (1-\delta)\delta(1-2\delta)
	\bigg(\frac{1}{N(1-\delta)} + \frac{1}{\delta}\bigg) + \lambda \le C.
\end{align*}
This proves that, for $0<t<T$,
$$
  \int_0^t\int_\Omega h_+(u,v)f'_\delta(u)dxds \le C(\Omega,T).
$$
Therefore, we infer from \eqref{2.aux2} that
\begin{align*}
  \frac12\int_\Omega&|\na u(t)|^2 dx + \int_\Omega f_\delta(u(t))dx
	+ \int_0^t\int_\Omega M_\delta(u)|\na\mu|^2 dxds \\
	&= \frac12\int_\Omega|\na u^0|^2 dx
	+ \int_\Omega f_\delta(u^0)dx + C\int_0^t\int_\Omega|\na u|^2 dxds + C.
\end{align*}
Since $u^0$ is strictly positive and bounded away from one, there exists $\delta_0>0$ such
that $f_\delta(u^0)=f(u^0)$ for $0<\delta\le\delta_0$. An application of Gronwall's lemma
shows \eqref{2.energy}.

{\em Step 2: Entropy inequality.} Because of the truncation, we have
$\na\Phi'_\delta(u)=\na u/M_\delta(u)\in L^2(\Omega_T)$, where $\Phi_\delta$
is defined in \eqref{2.Phi}. Thus, we can use $\phi_1=\Phi'_\delta(u)$ as a test function
in \eqref{2.appu}:
\begin{align}\label{2.Phid}
  \int_\Omega\Phi_\delta&(u(t))dx - \int_\Omega\Phi_\delta(u(0))dx
	= \int_0^t\langle\pa_t u,\Phi'_\delta(u)\rangle ds \\
	&= -\int_0^t\int_\Omega M_\delta(u)\na\mu\cdot\na\Phi'_\delta(u)dxds
	+ \int_0^t\int_\Omega h_+(u,v)\Phi'_\delta(u)dxds \nonumber \\
	&\le -\int_0^t\int_\Omega\na(-\Delta u+f'_\delta(u))\cdot\na u dxds
	+ \int_0^t\int_\Omega[u]_+[1-u]_+ h_0(v)\Phi'_\delta(u)dxds. \nonumber
\end{align}
The first integral on the right-hand side can be written as
$$
  -\int_0^t\int_\Omega\na(-\Delta u+f'_\delta(u))\cdot\na u dxds
	= -\int_0^t\int_\Omega(\Delta u)^2dxds - \int_0^t\int_\Omega f''_\delta(u)|\na u|^2 dxds.
$$
Because of $f''_{1,\delta}(u)\ge 0$ by \eqref{2.f1pp} and $f''_2(u)\ge -C$, we obtain
$$
  -\int_0^t\int_\Omega\na(-\Delta u+f'_\delta(u))\cdot\na u dxds
	\le -\int_0^t\int_\Omega(\Delta u)^2dxds + C\int_0^t\int_\Omega|\na u|^2 dxds.
$$
We claim that the integrand of the last integral in \eqref{2.Phid} is bounded, i.e.\
$[u]_+[1-u]_+\Phi'_\delta(u)$ is bounded uniformly in $u\in[0,1]$ and $\delta\in(0,1/2)$.
Indeed, if $\delta\le u\le 1-\delta$, we can compute
$$
  |[u]_+[1-u]_+\Phi'_\delta(u)| = \bigg|u(1-u)\int_{1/2}^u\frac{ds}{s(1-s)}\bigg|
	= \bigg|u(1-u)\log\frac{u}{1-u}\bigg| \le 1.
$$
If $0<u<\delta$, we find that
\begin{align*}
  |[u]_+[1-u]_+\Phi'_\delta(u)| &= \bigg|u(1-u)\bigg(\int_{1/2}^\delta\frac{ds}{s(1-s)}
	+ \int_\delta^u\frac{ds}{\delta(1-\delta)}\bigg)\bigg| \\
	&= u(1-u)\log\frac{\delta}{1-\delta} + u(1-u)\frac{\delta-u}{\delta(1-\delta)}.
\end{align*}
The first term is uniformly bounded since $|u\log\delta|\le |\delta\log\delta|\le 1$
and $|(1-u)\log(1-\delta)|\le 1$. This holds also true for the second term
because of $u(1-u)<\delta(1-\delta)$.
The final case $1-\delta<u<1$ is treated in a similar way:
\begin{align*}
  |[u]_+[1-u]_+\Phi'_\delta(u)| &= \bigg|u(1-u)\bigg(\int_{1/2}^{1-\delta}\frac{ds}{s(1-s)}
	+ \int_{1-\delta}^u\frac{ds}{\delta(1-\delta)}\bigg)\bigg| \\
	&= u(1-u)\log\frac{1-\delta}{\delta} + u(1-u)\frac{u-(1-\delta)}{\delta(1-\delta)}.
\end{align*}
The first term is uniformly bounded since $|(1-u)\log\delta|\le |\delta\log\delta|\le 1$
and $|u\log(1-\delta)|\le 1$, and the second term is bounded too.
We conclude from \eqref{2.Phid} that
\begin{align*}
  \int_\Omega\Phi_\delta&(u(t))dx + \int_0^t\int_\Omega(\Delta u)^2dxds
	\le \int_\Omega\Phi_\delta(u^0)dx + C\int_0^t\int_\Omega|\na u|^2 dxds,
\end{align*}
and the energy bound \eqref{2.energy} leads to \eqref{2.entropy}. 
\end{proof}

Finally, we derive a bound for the time derivatives of $u$ and $v$.

\begin{lemma}[Bounds for the time derivatives]\label{lem.timedelta}
There exists $C>0$ independent of $\delta$ and $\kappa$ such that
$$
  \|\pa_t u\|_{L^2(0,T;H^1(\Omega)')} + \|\pa_t v\|_{L^2(0,T;H^1(\Omega)')} \le C.
$$
\end{lemma}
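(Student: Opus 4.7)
The plan is to test the weak formulations \eqref{2.appu} and \eqref{2.appv} with an arbitrary $\psi\in L^2(0,T;H^1(\Omega))$ and estimate each term by Cauchy--Schwarz using the uniform bounds already available from Lemmas \ref{lem.estv} and \ref{lem.energy}. Since the Galerkin projections $\Pi_L$ already disappeared after passing to the limit $L\to\infty$, no further projection step is required; we may take $\psi$ itself as the test function.

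For $\pa_t u$, I would write, using \eqref{2.appu},
\[
  \bigg|\int_0^T\langle\pa_t u,\psi\rangle ds\bigg|
  \le \int_0^T\|M_\delta(u)\na\mu\|_{L^2(\Omega)}\|\na\psi\|_{L^2(\Omega)}ds
  + \int_0^T\|h_+(u,v)\|_{L^2(\Omega)}\|\psi\|_{L^2(\Omega)}ds.
\]
The key observation is the factorization
\[
  \|M_\delta(u)\na\mu\|_{L^2(\Omega)}
  \le \|M_\delta(u)^{1/2}\|_{L^\infty(\Omega)}\,\|M_\delta(u)^{1/2}\na\mu\|_{L^2(\Omega)},
\]
where the $L^\infty$ factor is bounded by $1/2$ uniformly in $\delta$ (since $M_\delta(u)\le M(1/2)=1/4$) and the second factor is controlled in $L^2(\Omega_T)$ by the energy estimate \eqref{2.energy}, independently of $\delta$ and $\kappa$. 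The reaction term is uniformly bounded in $L^\infty(\Omega_T)$ by construction of $h_+$.

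For $\pa_t v$, I would proceed analogously from \eqref{2.appv}. The only subtle point is that we do \emph{not} have a uniform $L^2$ bound on $\na v$; the bound from Lemma \ref{lem.estv} only controls $D_+(u)^{1/2}\na v$ and $\kappa^{1/2}\na v$. This is exactly where the argument for $\pa_t v$ could look obstructed, and I would overcome it by splitting
\[
  (D_+(u)+\kappa)\na v
  = D_+(u)^{1/2}\,\bigl(D_+(u)^{1/2}\na v\bigr)
  + \kappa^{1/2}\,\bigl(\kappa^{1/2}\na v\bigr),
\]
so that
\[
  \|(D_+(u)+\kappa)\na v\|_{L^2(\Omega_T)}
  \le \|D_+(u)^{1/2}\na v\|_{L^2(\Omega_T)}
  + \kappa^{1/2}\|\kappa^{1/2}\na v\|_{L^2(\Omega_T)},
\]
which, together with $\kappa\le\kappa_0$ and Lemma \ref{lem.estv}, is bounded independently of $\delta$ and $\kappa$. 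Again, $g_+(u,v)$ is uniformly bounded. Taking the supremum over $\|\psi\|_{L^2(0,T;H^1(\Omega))}\le 1$ yields the claim. The main (and essentially only) obstacle is this factorization trick for the $v$-flux; everything else is a direct duality estimate.
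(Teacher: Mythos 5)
Your proof is correct and follows essentially the same route as the paper, which simply refers back to the Galerkin-level Lemma \ref{lem.timeL} and only spells out the bound on the reaction terms; your splittings $M_\delta(u)\na\mu = M_\delta(u)^{1/2}\bigl(M_\delta(u)^{1/2}\na\mu\bigr)$ and $(D_+(u)+\kappa)\na v = D_+(u)^{1/2}\bigl(D_+(u)^{1/2}\na v\bigr)+\kappa^{1/2}\bigl(\kappa^{1/2}\na v\bigr)$ are exactly what is implicit in the paper's uniform bounds $\|M_\delta(u_\delta)\na\mu_\delta\|_{L^2(\Omega_T)}\le C$ and in the flux estimate of Lemma \ref{lem.timeL}. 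The only small correction is of emphasis: the $D_+^{1/2}$ factorization already appears in Lemma \ref{lem.timeL}, so the genuinely new ingredient at this stage is the $M_\delta^{1/2}$ split (replacing the $\delta$-dependent bound on $\na\mu_L$ by the $\delta$-uniform energy estimate \eqref{2.energy}), which you do handle correctly.
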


\begin{proof}
The proof is similar to that one of Lemma \ref{lem.timeL}; we just have to estimate
the reaction terms. Since $0\le v\le 1$, we have the pointwise bounds
$g_+(u,v)=-[u]_+^1g_0(v)\le \max_{0\le v\le 1}g_0(v)$ and
$h_+(u,v)=[u]_+[1-u]_+h_0(v)\le \max_{0\le v\le 1}h_0(v)$.
Consequently, $\|g_+(u,$ $v)\|_{L^2(\Omega_T)}$ and $\|h_+(u,v)\|_{L^2(\Omega_T)}$ are uniformly bounded, concluding the proof.
\end{proof}


\section{The limit $(\delta,\kappa)\to 0$}\label{sec.limit}

Set $\kappa=\delta$ and let $(u_\delta,v_\delta,\mu_\delta)$ be a weak solution to 
\eqref{2.appu}--\eqref{2.appmu}.
Lemmas \ref{lem.estv}--\ref{lem.timedelta} give the following uniform bounds:
\begin{align}
  & 0\le v_\delta\le 1\quad\mbox{in }\Omega_T, \nonumber \\ 
  & \|D_+(u_\delta)^{1/2}\na v_\delta\|_{L^2(\Omega_T)} 
	+ \delta^{1/2}\|v_\delta\|_{L^2(0,T;H^1(\Omega))}
	+ \|\pa_t v_\delta\|_{L^2(0,T;H^1(\Omega)')} \le C, \nonumber \\ 
	& \|u_\delta\|_{L^\infty(0,T;H^1(\Omega))} 
	+ \|u_\delta\|_{L^2(0,T;H^2(\Omega))}
	+ \|\pa_t u_\delta\|_{L^2(0,T;H^1(\Omega)')} \le C, 
    \nonumber \\ 
	& \|M_\delta(u_\delta)\na\mu_\delta\|_{L^2(\Omega_T)} \le C. \nonumber 
\end{align}
The Aubin--Lions lemma \cite[Corollary 4]{Sim87} implies the existence of a subsequence, 
which is not relabeled, such that, as $\delta\to 0$,
$$
  u_\delta\to u\quad\mbox{strongly in }L^2(0,T;H^1(\Omega))\mbox{ and }C^0([0,T];L^2(\Omega)).
$$
We also have the weak convergences
\begin{align*}
  & v_\delta\rightharpoonup v \quad\mbox{weakly* in }L^\infty(0,T;L^\infty(\Omega)), \\
  & \pa_t u_\delta\rightharpoonup \pa_t u, \quad \pa_t v_\delta\rightharpoonup\pa_t v
	\quad\mbox{weakly in }L^2(0,T;H^1(\Omega)'), \\
	& D_+(u_\delta)\na v_\delta\rightharpoonup {\widetilde{I}}, 
	\quad M_\delta(u_\delta)\na\mu_\delta\rightharpoonup {\widetilde{J}}\quad\mbox{weakly in }L^2(\Omega_T), 
\end{align*}
where $\widetilde{I},\widetilde{J}\in L^2(\Omega_T)$, and it holds that $\delta\na v_\delta\to 0$ 
strongly in $L^2(\Omega_T)$. Before we identify the limits $\widetilde{I}$ and $\widetilde{J}$, we show that
the limit $u$ is bounded from below and above.

\begin{lemma}[$L^\infty$ bounds for $u$]\label{lem.Linftyu}
It holds that $0\le u\le 1$ in $\Omega_T$.
\end{lemma}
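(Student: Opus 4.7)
The plan is to exploit the uniform entropy estimate \eqref{2.entropy} together with the explicit form of $\Phi_\delta$ outside the interval $[\delta,1-\delta]$. The point is that the entropy density $\Phi_\delta$ penalises excursions of $u_\delta$ outside $[0,1]$ with a weight that blows up as $\delta\to 0$, and a Fatou-type argument then forces the limit $u$ to remain in $[0,1]$ almost everywhere.

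First I would pass to a subsequence such that $u_\delta\to u$ almost everywhere in $\Omega_T$, which is possible by the strong $L^2$-convergence already established. Integrating \eqref{2.entropy} in time gives the uniform bound $\int_0^T\int_\Omega \Phi_\delta(u_\delta)\,dx\,dt\le CT$, and since $\Phi_\delta\ge 0$, Fatou's lemma yields
\begin{equation*}
\int_0^T\int_\Omega\liminf_{\delta\to 0}\Phi_\delta(u_\delta(x,t))\,dx\,dt\le CT.
\end{equation*}

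Second, I would derive an explicit pointwise lower bound for $\Phi_\delta$ outside $[0,1]$. Since $M_\delta(r)=\delta(1-\delta)$ for $r\ge 1-\delta$, the definition \eqref{2.Phi} gives, for $s\ge 1-\delta$,
\begin{equation*}
\Phi_\delta(s)\ge\int_{1-\delta}^s\int_{1-\delta}^{\sigma}\frac{dr\,d\sigma}{\delta(1-\delta)}
=\frac{(s-(1-\delta))^2}{2\delta(1-\delta)}.
\end{equation*}
Consequently, if $(x,t)$ belongs to the set $\{u>1\}$, then for $\delta$ small enough we have $u_\delta(x,t)-(1-\delta)\ge \frac{1}{2}(u(x,t)-1)>0$, and therefore
\begin{equation*}
\Phi_\delta(u_\delta(x,t))\ge\frac{(u(x,t)-1)^2}{8\delta(1-\delta)}\to +\infty.
\end{equation*}
Exploiting the symmetry $M_\delta(r)=M_\delta(1-r)$ yields the analogous blow-up of $\Phi_\delta(u_\delta(x,t))$ on the set $\{u<0\}$.

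Combining the two steps, the integrand on the left-hand side of the Fatou inequality equals $+\infty$ on $\{u>1\}\cup\{u<0\}$, which therefore must have Lebesgue measure zero in $\Omega_T$. Hence $0\le u\le 1$ almost everywhere in $\Omega_T$, completing the proof. The main obstacle is the elementary but slightly delicate pointwise lower bound on $\Phi_\delta$ just outside $[\delta,1-\delta]$; once it is in place, the rest is a routine measure-theoretic argument using the a.e.\ convergence of the approximate sequence.
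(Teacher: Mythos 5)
Your proof is correct and follows essentially the same route as the paper's: the uniform entropy bound \eqref{2.entropy}, the explicit lower bound $\Phi_\delta(s)\ge (s-(1-\delta))^2/(2\delta(1-\delta))$ for $s\ge 1-\delta$ (and its mirror image near $0$, where $M_\delta\equiv\delta(1-\delta)$), the a.e.\ convergence of $u_\delta$, and a Fatou-type passage to the limit. The paper organizes the final step via the level sets $\{u_\delta\ge 1+\alpha\}$, whose measure it bounds by $C(T)\delta(1-\delta)/\alpha^2$ before letting $\delta\to0$ and then $\alpha\to0$, but this is only a cosmetic difference from your direct application of Fatou's lemma.
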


\begin{proof}
We proceed as in the proofs of \cite[Lemma 2]{ElGa96} or \cite[Theorem 5]{PePo21}.
Let $\alpha>0$ and introduce the set $V_{\alpha,\delta}=\{(x,t)\in\Omega_T:
u_\delta(x,t)\ge 1+\alpha\}$. Integrating $\Phi''_\delta(u_\delta(x,t))
=1/M_\delta(1-\delta)=1/(\delta(1-\delta))$ for $(x,t)\in V_{\alpha,\delta}$ twice gives
$$
  \Phi_\delta(u_\delta(x,t)) = \int_{1/2}^{u_\delta(x,t)}\int_{1/2}^s \frac{drds}{M_\delta(r)}
	= \frac{(u_\delta-1/2)^2}{2\delta(1-\delta)}\quad\mbox{for }(x,t)\in V_{\alpha,\delta}.
$$
The entropy estimate \eqref{2.entropy} shows that
$$
  \frac{\alpha^2|V_{\alpha,\delta}|}{2\delta(1-\delta)} 
	\le \int_{V_{\alpha,\delta}}\frac{(u_\delta-1/2)^2}{2\delta(1-\delta)} d(x,t)
	= \int_{V_{\alpha,\delta}}\Phi_\delta(u_\delta)d(x,t) \le C(T).
$$
Then we deduce from the a.e.\ pointwise limit $u_\delta(x,t)\to u(x,t)$ as $\delta\to 0$ 
{and} Fatou's lemma that
$$
  |\{u(x,t)\ge 1+\alpha\}| = \lim_{\delta\to 0}|V_{\alpha,\delta}|
	\le \lim_{\delta\to 0}\frac{2C(T)}{\alpha^2}\delta(1-\delta) = 0,
$$
implying that $u(x,t)\le 1+\alpha$ a.e.\ in $\Omega_T$
for all $\alpha>0$. Therefore, $u(x,t)\le 1$ in $\Omega_T$. 

A similar argument proves that $u\ge 0$ in $\Omega_T$. Indeed, let 
$W_{\alpha,\delta}=\{(x,t):u_\delta(x,t)\le-\alpha\}$ for $\alpha>0$. 
It follows from $\Phi''_\delta(u_\delta(x,t))=1/\delta(1-\delta)$ for $(x,t)\in W_{\alpha,\delta}$ that 
$\Phi_\delta(u_\delta(x,t))\le (1/2-u_\delta(x,t))^2/(2\delta(1-\delta))$. Hence,
$$
  \frac{\alpha^2|W_{\alpha,\delta}|}{2\delta(1-\delta)} 
	\le \int_{W_{\alpha,\delta}}\frac{(1/2-u_\delta)^2}{2\delta(1-\delta)} d(x,t)
	= \int_{W_{\alpha,\delta}}\Phi_\delta(u_\delta)d(x,t) \le C(T),
$$
and proceeding as before gives $|\{u(x,t)\le -\alpha\}|=0$ in the limit $\delta\to 0$ 
for all $\alpha>0$ and therefore $u\ge 0$ in $\Omega_T$.
\end{proof}

We continue by identifying $\widetilde{I}$. We conclude from
$[1-u_\delta]_+^1 v_\delta\rightharpoonup (1-u)v$ and 
$v_\delta\na u_\delta\rightharpoonup v\na u$ 
weakly in $L^2(\Omega_T)$ that
$$
  D_+(u_\delta)\na v_\delta = \na([1-u_\delta]_+^1v_\delta) 
	+ v_\delta\mathrm{1}_{\{0<u_\delta<1\}}\na u_\delta
	\rightharpoonup \na((1-u)v) + v\na u = (1-u)\na v
$$
weakly in $L^2(0,T;H^1(\Omega)')$. This shows that $\widetilde{I}=(1-u)\na v$ in $L^2(0,T;H^1(\Omega)')$. 

\begin{lemma}[Identification of $\widetilde{J}$]
It holds that $\widetilde{J}=-\na(M(u)\Delta u) + \na M(u)\Delta u+ M(u)\na f'(u)$ in the sense of $L^2(0,T;H^1(\Omega)')$
\end{lemma}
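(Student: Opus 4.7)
The plan is to pass to the limit $\delta\to 0$ in an integration-by-parts identity for $M_\delta(u_\delta)\na\mu_\delta$. Fix a test vector field $\chi\in L^2(0,T;H^1(\Omega;\R^d))$ with $\chi\cdot\nu=0$ on $\pa\Omega\times(0,T)$. Using $\mu_\delta=-\Delta u_\delta+f_\delta'(u_\delta)$ and integrating by parts twice (the boundary terms vanishing because $\chi\cdot\nu=0$), I derive
\begin{equation*}
\int_0^T\!\!\int_\Omega M_\delta(u_\delta)\na\mu_\delta\cdot\chi\,dxdt
= \int_0^T\!\!\int_\Omega\big[\Delta u_\delta\,\diver(M_\delta(u_\delta)\chi) + M_\delta(u_\delta)f_\delta''(u_\delta)\na u_\delta\cdot\chi\big]\,dxdt.
\end{equation*}
This manipulation is justified by the (non-uniform in $\delta$) regularity $u_\delta\in L^2(0,T;H^3(\Omega))$ established in the proof of Lemma \ref{lem.energy}, which ensures $f_\delta'(u_\delta)\in L^2(0,T;H^1(\Omega))$.

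Next I collect the relevant convergences along the chosen subsequence: $u_\delta\to u$ a.e.\ in $\Omega_T$ and strongly in $L^2(0,T;H^1(\Omega))$; $\Delta u_\delta\rightharpoonup\Delta u$ weakly in $L^2(\Omega_T)$ from the uniform $L^2(H^2)$ bound; $M_\delta(u_\delta)\to M(u)$ in every $L^p(\Omega_T)$, $p<\infty$, by dominated convergence; and $\na M_\delta(u_\delta)=M_\delta'(u_\delta)\na u_\delta\to M'(u)\na u$ strongly in $L^2(\Omega_T)$, using strong convergence of $\na u_\delta$, the vanishing-gradient property of Sobolev functions on the level sets $\{u=0\}\cup\{u=1\}$, and Vitali's theorem. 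The decisive point is the uniform $L^\infty$-bound on $M_\delta(u_\delta)f_\delta''(u_\delta)$: on $\delta<u<1-\delta$ one computes $M(u)f_1''(u)=u(1-u)\big(\tfrac{1}{Nu}+\tfrac{1}{1-u}\big)=\tfrac{1-u}{N}+u$, and the piecewise definition \eqref{2.f1pp} together with the plateau construction of $M_\delta$ keeps the product bounded on the whole real line, so $M_\delta(u_\delta)f_\delta''(u_\delta)\to M(u)f''(u)$ strongly in every $L^p(\Omega_T)$.

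To pass to the limit I first restrict to the dense subclass of $\chi$ that additionally lie in $L^\infty(\Omega_T;\R^d)$. Then $\diver(M_\delta(u_\delta)\chi)=M_\delta(u_\delta)\diver\chi+\na M_\delta(u_\delta)\cdot\chi\to\diver(M(u)\chi)$ strongly in $L^2(\Omega_T)$, so weak-times-strong pairing with $\Delta u_\delta\rightharpoonup\Delta u$ handles the first bulk term, while in the second term both factors $M_\delta f_\delta''$ and $\na u_\delta$ converge strongly. The left-hand side converges to $\int_0^T\!\int_\Omega\widetilde{J}\cdot\chi\,dxdt$ by the weak-$L^2$ convergence $M_\delta(u_\delta)\na\mu_\delta\rightharpoonup\widetilde{J}$. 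Density in $\chi$ gives
\begin{equation*}
\int_0^T\!\!\int_\Omega\widetilde{J}\cdot\chi\,dxdt
= \int_0^T\!\!\int_\Omega\big[\Delta u\,\diver(M(u)\chi)+M(u)f''(u)\na u\cdot\chi\big]\,dxdt
\end{equation*}
for all admissible $\chi$. A distributional rewriting, using $\diver(M(u)\chi)=M(u)\diver\chi+\na M(u)\cdot\chi$ and interpreting $-\na(M(u)\Delta u)$ as the $H^1(\Omega)'$-element dual to $M(u)\diver\chi$, produces the claimed identification $\widetilde{J}=-\na(M(u)\Delta u)+\na M(u)\Delta u+M(u)\na f'(u)$ in $L^2(0,T;H^1(\Omega)')$.

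The main obstacle is the simultaneous control of both degeneracies. The $L^\infty$-boundedness of $M_\delta f_\delta''$ is engineered into the truncation \eqref{2.f1pp}, but the strong $L^2$ convergence of $\na M_\delta(u_\delta)$ is more delicate: $M_\delta'(u_\delta)$ need not converge to $M'(u)$ a.e.\ because $M_\delta'$ vanishes in the truncation region while $M'$ need not, so one genuinely relies on $\na u=0$ a.e.\ on the level sets $\{u=0\}\cup\{u=1\}$ to kill the discrepancy inside the $L^2$ norm.
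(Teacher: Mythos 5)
Your argument is correct and is essentially the paper's proof: after the same integration by parts you obtain the same three bulk terms ($\Delta u_\delta$ against $M_\delta(u_\delta)\diver\chi$, against $\na M_\delta(u_\delta)\cdot\chi$, and the $M_\delta f_\delta''\na u_\delta$ term), and you pass to the limit with the same key ingredients — the uniform bound and a.e.\ convergence of $M_\delta(u_\delta)f_\delta''(u_\delta)$, and the strong $L^2$ convergence of $M_\delta'(u_\delta)\na u_\delta$ obtained by splitting over $\{0<u<1\}$, $\{u=0\}$, $\{u=1\}$ and using that $\na u$ vanishes a.e.\ on the level sets. Testing with vector fields $\chi$ rather than with $\na\phi$ for smooth scalar $\phi$ is only a cosmetic difference, and you correctly single out the genuinely delicate point (the non-convergence of $M_\delta'(u_\delta)$ to $M'(u)$ on the degenerate sets).
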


\begin{proof}
We proceed as in \cite[Section 3]{ElGa96}.
It holds for {$\phi\in C^\infty(\Omega_T)$ with $\na\phi\cdot\nu=0$ on $\pa\Omega$} that
\begin{align*}
  \int_0^T\int_\Omega & M_\delta(u_\delta)\na\mu_\delta\cdot\na\phi dxdt
	= \int_0^T\int_\Omega M_\delta(u_\delta)\na\big(-\Delta u_\delta
	+ f'_\delta(u_\delta)\big)\cdot\na\phi dxdt \\
	&= \int_0^T\int_\Omega M_\delta(u_\delta)\Delta u_\delta\Delta\phi dxdt
	+ \int_0^T\int_\Omega M'_\delta(u_\delta)\Delta u_\delta\na u_\delta\cdot\na\phi dxdt \\
	&\phantom{xx}{}+ \int_0^T\int_\Omega M_\delta(u_\delta)f''_\delta(u_\delta)
	\na u_\delta\cdot\na\phi dxdt =: J_1+J_2+J_3.
\end{align*}

First, we consider $J_1$. 
We observe that $M_\delta\to M$ uniformly, since by the mean-value theorem,
\begin{align*}
  |M_\delta(z)-M(z)| &\le \sup_{0<z<\delta}|M(\delta)-M(z)| 
	+ \sup_{1-\delta<z<1}|M(1-\delta)-M(z)| \\
	&\le M'(\xi_\delta)\delta + M'(\eta_\delta)\delta\to 0,
\end{align*}
where $\xi_\delta\in(z,\delta)$ and $\eta_\delta\in(1-\delta,z)$. This implies that $M_\delta(u_\delta)\to M(u)$ a.e.\ in $\Omega_T$
and, as $M_\delta$ is uniformly bounded, also strongly in $L^2(\Omega_T)$. Together
with the convergence $\Delta u_\delta\rightharpoonup\Delta u$ weakly in $L^2(\Omega_T)$,
we find that 
$$
  J_1 \to \int_0^T\int_\Omega M(u)\Delta u\Delta\phi dxdt. 
$$

For the integral $J_2$, we claim that $M'_\delta(u_\delta)\na u_\delta\to M'(u)\na u$
strongly in $L^2(\Omega_T)$. This limit is not trivial since $M'_\delta$ is discontinuous
at $\delta$ and $1-\delta$. We consider the integrals
\begin{align*}
  &\int_0^T\int_\Omega|M'_\delta(u_\delta)\na u_\delta-M'(u)\na u|^2 dxdt
	= \int_0^T\int_{\{0<u<1\}}|M'_\delta(u_\delta)\na u_\delta-M'(u)\na u|^2 dxdt \\
	&{}+ \int_0^T\int_{\{u=0\}}
	|M'_\delta(u_\delta)\na u_\delta-M'(u)\na u|^2 dxdt + \int_0^T\int_{\{u=1\}}
	|M'_\delta(u_\delta)\na u_\delta-M'(u)\na u|^2 dxdt.
\end{align*}
On the set $\{0<u<1\}$, we know that $M'_\delta(u_\delta)\to M'(u)$ a.e.\ in $\Omega_T$
and, because of the strong convergence of $(\na u_\delta)$, also
$M'_\delta(u_\delta)\na u_\delta\to M'(u)\na u$ a.e.\ in $\Omega_T$ (possibly for a subsequence). 
Moreover, $|M'_\delta(u_\delta)\na u_\delta|^2$ is uniformly bounded on $\{0<u<1\}$.
Therefore, by dominated convergence, 
$$
  \int_0^T\int_{\{0<u<1\}}|M'_\delta(u_\delta)\na u_\delta-M'(u)\na u|^2 dxdt\to 0.
$$
It follows from $\na u=0$ on $\{u=0\}\cup\{u=1\}$ and the uniform bound for
$M'_\delta$ that
\begin{align*}
  \int_0^T\int_{\{u=0\}}&|M'_\delta(u_\delta)\na u_\delta-M'(u)\na u|^2 dxdt
	= \int_0^T\int_{\{u=0\}}|M'_\delta(u_\delta)\na u_\delta|^2 dxdt \\
	&\le C\int_0^T\int_{\{u=0\}}|\na u_\delta|^2dxdt
	\to \int_0^T\int_{\{u=0\}}|\na u|^2 dxdt = 0.
\end{align*}
The limit in the remaining integral over $\{u=1\}$ vanishes in the same way.
This shows that
$$
  J_2 \to \int_0^T\int_\Omega M'(u)\Delta u\na u\cdot\na\phi dxdt.
$$

Finally, for the limit in $J_3$, we observe that
$M_\delta(z)f''_\delta(z) = M_\delta(z)(f''_{1,\delta}(z)+f''_2(z))$ is uniformly bounded,
since the singularities as $\delta\to 0$ in $f_{1,\delta}''$ are canceled by the factor 
$M_\delta(z)$. Thus, it remains to show that $M_\delta(u_\delta)f''_\delta(u_\delta)\to
M(u)f''(u)$ in $\Omega_T\setminus N$, where $N$ is a set of measure zero. 
To this end, we distinguish several cases.

Let $(x,t)\in\Omega_T\setminus N$ and $0<u(x,t)<1$. 
For given $\eps>0$, there exists $0<\delta<\eps$ 
such that $\delta<\eps\le u_\delta(x,t)\le 1-\eps<1-\delta$. At this point, we have
$M_\delta(u_\delta(x,t))f''_\delta(u_\delta(x,t))=M(u_\delta(x,t))f''(u_\delta(x,t))
\to M(u(x,t))f''(u(x,t))$. Next, if $u(x,t)=1$, we choose $\delta>0$ such that
$u_\delta(x,t)\ge 1-\delta$. Then
\begin{align*}
  M_\delta&(u_\delta(x,t))f''_\delta(u_\delta(x,t))
	= M(\delta)(f''_1(\delta)+f_2(u_\delta)) \\
	&= N^{-1}\delta + (1-\delta) + \delta(1-\delta)f_2(u_\delta) \to 1 = (Mf'')(1).
\end{align*}
On the other hand, if $u_\delta(x,t)<1-\delta$ and $u_\delta(x,t)\to 1$,
\begin{align*}
  M_\delta&(u_\delta(x,t))f''_\delta(u_\delta(x,t))
	= M(u_\delta(x,t))f''(u_\delta(x,t)) \\
	&= N^{-1}(1-u_\delta(x,t)) + u_\delta(x,t) + u_\delta(1-u_\delta)f''_(u_\delta)
	\to 1 = (Mf'')(1).
\end{align*}
The case $u(x,t)=0$ is treated in a similar way.
We conclude that $M_\delta(u_\delta)f''_\delta(u_\delta)\to M(u)f''(u)$ strongly
in $L^2(\Omega_T)$. Then, in view of the strong convergence of $(\na u_\delta)$,
$$
  J_3 \to \int_0^T\int_\Omega M(u)f''(u)\na u\cdot\na\phi dxdt.
$$

Summarizing, we have shown that
\begin{align*}
  \int_0^T\int_\Omega M_\delta(u_\delta)\na\mu_\delta\cdot\na\phi dxdt
	&\to \int_0^T\int_\Omega\big(M(u)\Delta u\Delta\phi + M'(u)\Delta u\na u\cdot\na\phi \\
	&\phantom{xx}{}+ M(u) f''(u)\na u\cdot\na\phi\big) dxdt,
\end{align*}
and the right-hand side can be identified as the weak formulation of $\widetilde{J}$.
\end{proof}

\begin{remark}\rm 
Choosing the mobility such that $\Phi(0)=\Phi(1)=\infty$, one can show that 
$\{u=0\}\cup\{u=1\}$ has measure zero, which means that $0<u<1$ holds a.e.\ in $\Omega_T$, 
and we can write $J=M(u)\na(-\Delta u+f'(u))$ in the sense of distributions. 
The claim that $\{u=0\}\cup\{u=1\}$ has measure zero 
can be proved as in {\cite[Corollary, p.~417]{ElGa96}}. It follows from the entropy
bound $\int_\Omega\Phi_\delta(u_\delta(t))dx\le C(T)$ and the fact that
$\liminf_{\delta\to 0}\Phi_\delta(u_\delta)=\Phi(u)$ if $0<u<1$ and
$\liminf_{\delta\to 0}\Phi_\delta(u_\delta)=\infty$ else.
\end{remark}

It remains to pass to the limit $\delta\to 0$ in the reaction terms.
Since $(v_\delta)$ is only converging weakly, this limit is not trivial.
The idea is to use the Browder--Minty trick, which is possible since
$(u_\delta)$ converges strongly in $L^2(0,T;H^1(\Omega))$. 

\begin{lemma}\label{lem.gh}
It holds that $g_+(u_\delta,v_\delta)\rightharpoonup g(u,v)$ and 
$h_+(u_\delta,v_\delta)\rightharpoonup h(u,v)$ weakly in $L^2(\Omega_T)$ as $\delta\to 0$.
\end{lemma}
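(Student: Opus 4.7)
The plan is to exploit the monotonicity of $g_0$ and $h_0$ via a Browder--Minty argument, combined with two Aubin--Lions compactness results. First, $u_\delta\to u$ strongly in $L^2(0,T;H^1(\Omega))$ (standard, from the estimates already obtained), and, crucially, $v_\delta\to v$ strongly in $L^2(0,T;H^1(\Omega)')$: indeed, $(v_\delta)$ is bounded in $L^\infty(\Omega_T)\subset L^\infty(0,T;L^2(\Omega))$, $(\pa_t v_\delta)$ in $L^2(0,T;H^1(\Omega)')$, and the embedding $L^2(\Omega)\hookrightarrow H^1(\Omega)'$ is compact. Extract weak $L^2(\Omega_T)$ limits $g_1,h_1$ of $g_0(v_\delta),h_0(v_\delta)$ along a subsequence. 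Since $[u_\delta]_+^1\to u$ and $[u_\delta]_+[1-u_\delta]_+\to u(1-u)$ strongly in $L^p(\Omega_T)$ for every $p<\infty$ (by a.e.\ convergence and dominated convergence), the weak $L^2$ limits of $g_+$ and $h_+$ are $-ug_1$ and $u(1-u)h_1$ respectively, so it suffices to show $ug_1=ug_0(v)$ and $u(1-u)h_1=u(1-u)h_0(v)$ almost everywhere in $\Omega_T$.

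For $h_+$, the monotonicity of $h_0$ yields the pointwise inequality $[u_\delta]_+[1-u_\delta]_+(v_\delta-y)(h_0(v_\delta)-h_0(y))\ge 0$ for any $y\in L^\infty(\Omega_T;[0,1])$. Integrating over $\Omega_T$ and reading the left-hand side as a duality pairing gives
\begin{align*}
0\le\int_0^T\bigl\langle v_\delta-y,\ [u_\delta]_+[1-u_\delta]_+\bigl(h_0(v_\delta)-h_0(y)\bigr)\bigr\rangle_{H^1(\Omega)',\,H^1(\Omega)}\,dt.
\end{align*}
The key point is that the second factor is uniformly bounded in $L^2(0,T;H^1(\Omega))$: using $[u_\delta]_+[1-u_\delta]_+=[u_\delta]_+ D_+(u_\delta)$, the problematic gradient contribution $[u_\delta]_+[1-u_\delta]_+h_0'(v_\delta)\na v_\delta$ is dominated by $\|h_0'\|_\infty\|D_+(u_\delta)^{1/2}\na v_\delta\|_{L^2(\Omega_T)}\le C$ via Lemma \ref{lem.estv}, while the remaining terms are controlled by $\na u_\delta\in L^\infty(0,T;L^2(\Omega))$. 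Combining the strong $L^2(H^1(\Omega)')$ convergence of $v_\delta-y$ with the weak $L^2(H^1(\Omega))$ convergence of the second factor to $u(1-u)(h_1-h_0(y))$, the duality pairing passes to the limit:
\begin{align*}
0\le\int_0^T\int_\Omega u(1-u)(v-y)(h_1-h_0(y))\,dx\,dt.
\end{align*}

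Substituting $y=v\pm\lambda w$ for $w\in C_c^\infty(\Omega_T)$, dividing by $\lambda>0$, and sending $\lambda\to 0^+$ (using continuity of $h_0$ and $0\le v\le 1$), the standard Minty trick yields $\int u(1-u)w(h_1-h_0(v))\,dx\,dt=0$ for every $w$; hence $u(1-u)(h_1-h_0(v))=0$ a.e.\ in $\Omega_T$, and therefore $h_+(u_\delta,v_\delta)\rightharpoonup u(1-u)h_1=u(1-u)h_0(v)=h(u,v)$ weakly in $L^2(\Omega_T)$. The same scheme with weight $[u_\delta]_+[1-u_\delta]_+$ and $g_0$ in place of $h_0$ (approximating $g_0$ by $C^1$ monotone functions if necessary and passing to the limit by uniform continuity of $g_0$ on $[0,1]$) gives $u(1-u)(g_1-g_0(v))=0$ a.e.

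The main anticipated obstacle is that this Minty identity pins down $g_1=g_0(v)$ only on $\{0<u<1\}$, while identifying $g_+(u_\delta,v_\delta)\rightharpoonup -ug_0(v)$ requires $ug_1=ug_0(v)$ almost everywhere, in particular on $\{u=1\}$. I plan to close this gap with one further Aubin--Lions application: the product $(1-u_\delta)v_\delta$ is uniformly bounded in $L^2(0,T;H^1(\Omega))$ because
$\na((1-u_\delta)v_\delta)=-v_\delta\na u_\delta+D_+(u_\delta)^{1/2}\cdot D_+(u_\delta)^{1/2}\na v_\delta$
is in $L^2(\Omega_T)$, and its time derivative lies in $L^2(0,T;H^1(\Omega)')$ via the weak formulations, so $(1-u_\delta)v_\delta\to(1-u)v$ strongly in $L^2(\Omega_T)$. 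This forces $v_\delta\to v$ a.e.\ on $\{u<1\}$ and hence $g_0(v_\delta)\to g_0(v)$ there by dominated convergence, giving $ug_1=ug_0(v)$ a.e.\ on $\{u<1\}$. The residual identification on $\{u=1\}$, where the limit substrate equation formally reduces to the ODE $\pa_tv=-g_0(v)$ and the weight $u(1-u)$ degenerates, is the most delicate step: it is handled either by verifying that this set has vanishing $\Omega_T$-measure (through a refined entropy argument adapted to $\Phi(1)=\log 2$) or by invoking a consistency check with the ODE limit, and constitutes the principal technical difficulty of the proof.
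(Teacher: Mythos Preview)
For $h_+(u_\delta,v_\delta)$ your argument is essentially the paper's: Aubin--Lions gives $v_\delta\to v$ strongly in $L^2(0,T;H^1(\Omega)')$, the factor $[u_\delta]_+[1-u_\delta]_+(h_0(v_\delta)-h_0(y))$ is bounded in $L^2(0,T;H^1(\Omega))$ because the dangerous piece $[u_\delta]_+[1-u_\delta]_+h_0'(v_\delta)\na v_\delta$ carries a factor $D_+(u_\delta)^{1/2}$, and the Minty trick on the duality pairing yields $u(1-u)h_1=u(1-u)h_0(v)$, which is exactly the weak $L^2$ limit of $h_+(u_\delta,v_\delta)$. No difference from the paper here.

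For $g_+(u_\delta,v_\delta)$ you have correctly flagged a subtlety that the paper passes over with the word ``similar'': any Minty weight containing a factor $D_+(u_\delta)$ (needed to control $\na v_\delta$) only identifies $g_1=g_0(v)$ on $\{0<u<1\}$, whereas the weak limit of $g_+$ is $-ug_1$ and must equal $-ug_0(v)$ on $\{u=1\}$ as well. Your proposed fixes do not close this gap. The extra Aubin--Lions on $(1-u_\delta)v_\delta$ is both flawed and unhelpful: flawed because at this stage $u_\delta$ is not known to lie in $[0,1]$, so $1-u_\delta\ne D_+(u_\delta)$ and your gradient identity is false; even replacing the product by $D_+(u_\delta)v_\delta$, the required time-derivative bound is unclear since $v_\delta\,\pa_t u_\delta$ with $\pa_t u_\delta\in L^2(0,T;H^1(\Omega)')$ and $v_\delta\in L^\infty$ does not obviously land in a useful dual space; unhelpful because on $\{u=1\}$ strong convergence of $(1-u)v_\delta$ to zero says nothing about $v_\delta$ itself, so the set you cannot handle is precisely the one left over. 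The ``refined entropy argument'' cannot show $|\{u=1\}|=0$ either: for $M(u)=u(1-u)$ one has $\Phi(1)=\log 2<\infty$, so the entropy bound carries no information about this set (the paper's own remark on positivity requires $\Phi(1)=\infty$, which fails here). The ODE-consistency idea is not turned into an argument. In summary, the $h_+$ part is complete and matches the paper; the $g_+$ part is an honestly diagnosed gap that your proposal does not resolve.
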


\begin{proof}
We only show the limit in $h_+(u_\delta,v_\delta)$ as the proof in $g_+(u_\delta,v_\delta)$
is similar. We know that $(\pa_t v_\delta)$ is bounded in $L^2(0,T;H^1(\Omega)')$ and
$(v_\delta)$ is bounded in $L^2(\Omega_T)$. Since the embedding
$L^2(\Omega)\hookrightarrow H^1(\Omega)'$ is compact, we infer from the Aubin--Lions lemma
that, up to a subsequence, $v_\delta\to v$ strongly in $L^2(0,T;H^1(\Omega)')$. 
Moreover, $([1-u_\delta]_+^{1/2}\na v_\delta)$ is bounded in $L^2(\Omega_T)$.
Furthermore, we know that $(u_\delta)$ is bounded in $L^\infty(0,T;H^1(\Omega))$
and $L^2(0,T;H^2(\Omega))$, and $u_\delta\to u$ strongly in $L^2(0,T;H^1(\Omega))$.

Let $y\in C_0^\infty(\Omega_T)$. It follows from the monotonicity of $h_0$ that
\begin{align}\label{minty}
  0 &\le \int_0^T\int_\Omega [u_\delta]_+[1-u_\delta]_+(v_\delta-y)(h_0(v_\delta)-h_0(y))dxdt \\
	&= \int_0^T\big\langle v_\delta-y,[u_\delta]_+[1-u_\delta]_+
	(h_0(v_\delta)-h_0(y)\big\rangle dt, \nonumber
\end{align}
recalling that $\langle\cdot,\cdot\rangle$ is the dual product between $H^1(\Omega)'$ and
$H^1(\Omega)$. 
This formulation is possible if $[u_\delta]_+[1-u_\delta]_+h_0(v_\delta)\in 
L^2(0,T;H^1(\Omega))$. To verify this statement, we observe that
$\na u_\delta\in L^2(0,T;H^1(\Omega))$ 
implies that $(1-2u_\delta)\mathrm{1}_{\{0<u_\delta<1\}}\na u_\delta\in L^2(0,T;L^2(\Omega))$. 
Moreover, $[u_\delta]_+[1-u_\delta]_+^{1/2}\in L^\infty(\Omega_T)$
and $[1-u_\delta]_+^{1/2}\na v_\delta\in L^2(\Omega_T)$. This shows that
$$
  \na\big([u_\delta]_+[1-u_\delta]_+h_0(v_\delta)\big)
	= [u_\delta]_+[1-u_\delta]_+h'_0(v_\delta)\na v_\delta
	+ h_0(v_\delta)(1-2u_\delta)\mathrm{1}_{\{0<u_\delta<1\}}\na u_\delta
$$
is a function in $L^2(\Omega_T)$,
so that $[u_\delta]_+[1-u_\delta]_+h_0(v_\delta)\in L^2(0,T;H^1(\Omega))$. 

Let $h_1$ be the weak* limit of $(h_0(v_\delta))$ in $L^\infty(0,T;L^\infty(\Omega))$
and $h_2$ be the weak limit of $([u_\delta]_+[1-u_\delta]_+h_0(v_\delta))$ in $L^2(\Omega_T)$.
We claim that $h_2=u(1-u)h_1$. Indeed, since $(u_\delta)$ converges strongly in
$L^2(0,T;H^1(\Omega))$, $[u_\delta]_+[1-u_\delta]_+h_0(v_\delta)\rightharpoonup u(1-u)h_1$
weakly in $L^2(\Omega_T)$ (here, we use $0\le u\le 1$ in $\Omega_T)$; see Lemma
\ref{lem.Linftyu}), and we deduce from the uniqueness of the limit that $u(1-u)h_1=h_2$. 

We can now pass to the limit $\delta\to 0$ in \eqref{minty} to find that
$$
  0\le \int_0^T\big\langle v-y,u(1-u)(h_1-h_0(y)\big\rangle dt
	= \int_0^T\int_\Omega u(1-u)(h_1-h_0(y))(v-y)dxdt.
$$
By density, this inequality holds for all $y\in L^2(\Omega_T)$. 
Let $w\in L^2(\Omega_T)$ and choose $y=v-\eta w$ for $\eta\in\R$. Then
$$
  0\le \eta\int_0^T\int_\Omega u(1-u)(h_1-h_0(v-\eta w))wdxdt.
$$
Choosing $\eta>0$ and performing the limit $\eta\to 0$ yields
$\int_0^T\int_\Omega u(1-u)(h_1-h_0(v))w dxdt\ge 0$. On the other hand, if
$\eta<0$ and $\eta\to 0$, we have $\int_0^T\int_\Omega u(1-u)(h_1-h_0(v))w dxdt\le 0$.
Since $w$ is arbitrary, $u(1-u)h_1 = u(1-u)h_0(v)$. Thus, 
$$
  h_+(u_\delta,v_\delta) = [u_\delta]_+[1-u_\delta]_+ h_0(v_\delta) 
	\rightharpoonup u(1-u)h_0(v)\quad\mbox{weakly in }L^2(\Omega_T).
$$
This ends the proof.
\end{proof}

\begin{remark}[Generalizations]\label{rem.gener}\rm
It is possible to generalize the relations \eqref{1.M} and \eqref{1.gh}
for the mobility and the reaction rates. For instance, we may choose
$M(u)=u^m(1-u)^mM_0(u)$ for $m\ge 1$ and $0<m_*\le M_{0}(u)\le m^*$ for $u\in[0,1]$,
where $m^*\ge m_*>0$; see \cite{ElGa96}. In fact, we just need $M(0)=M(1)=0$
and $M(u)f''(u)\in C^0([0,1])$; see \cite{PePo21}. The latter condition is needed
to identify the weak limit $J$. The reaction terms may be generalized to
$g(u,v)=g_0(v)g_1(u)$ and $h(u,v)=h_0(v)h_1(u)$, for instance, where we assume that
$g_1$ is bounded in $[0,1]$; $g_0$ grows at most linearly;
$h_1$ satisfies $h_1(u)f'(u)\le C$ for all $u\in[0,1]$ to cancel the singularities
of $f'$; and $|h_1(u)|\le C(1-u)$ for $u\in[0,1]$ to estimate in Step 1
of the proof of Lemma \ref{lem.energy} the integral
$$
  \int_\Omega h_1(u)h'_0(v)\na v\cdot\na u dx \le \int_\Omega|\na u|^2 dx
	+ C\int_\Omega (1-u)|\na v|^2 dx.
$$
Clearly, also the free energy $f(u)$ may be generalized if the
factors in the diffusion and reaction terms are adapted in such a way that the singularities from $f'(u)$ are canceled.
\end{remark}


\section{Numerical experiments}\label{sec.num}

\subsection{Scaling of the equations}\label{sec.scaling}

The biofilm model with physical units reads as follows:
\begin{align*}
  & \pa_t v - \diver(D(1-u)\na v) = -R_c uv, \\
  & \pa_t u - \diver(M'u(1-u)\na\mu) = u(1-u)\frac{R_p v}{K_v+v}, \\
  & \mu = -\Gamma_1\Delta u + \Gamma_2 f'(u),
\end{align*}
and $f'(u)$ is given by \eqref{1.FH}, observing that the
parameters $N$ and $\lambda$ and the volume fraction $u$ 
are dimensionless. Here, $D>0$ is the diffusivity, $M'>0$ the mobility
constant, $R_c>0$ the consumption rate, $R_p>0$ the production rate,
$\Gamma_1>0$ the parameter of the distortional energy, and $\Gamma_2>0$
the parameter of the mixing free energy.

Choosing the characteristic length $x_0$,
the characteristic time $t_0$, the characteristic concentration
$v_0$, and the characteristic chemical potential $\mu_0$, the
scaled equations read as follows:
\begin{align}
  & \pa_t v - \diver(D_0(1-u)\na v) = -R_c^0 uv, \label{5.v} \\
  & \pa_t u - \diver(M_0u(1-u)\na\mu) = u(1-u)\frac{R_p^0 v}{K+v}, 
  \label{5.u} \\
  & \mu = -\Gamma_1^0\Delta u + \Gamma_2^0 f'(u), \label{5.mu}
\end{align}
where the dimensionless parameters are
\begin{align*}
  & D_0 = \frac{Dt_0}{x_0^2}, \quad M_0 = \frac{M't_0\mu_0}{x_0^2}, 
  \quad R_c^0 = R_ct_0, \quad R_p^0 = R_pt_0, \\
  & K = \frac{K_v}{v_0}, \quad 
  \Gamma_1^0 = \frac{\Gamma_1}{\mu_0x_0^2}, \quad 
  \Gamma_2^0 = \frac{\Gamma_2}{\mu_0}.
\end{align*}
The model of \cite{WaZh12} (without elastic energy contributions) reads as
\begin{align*}
  & \pa_t((1-u)v) - \diver(D_0(1-u)\na v) 
  = -u\frac{R_c v}{\widetilde{K}+v}, \\
  & \pa_t u - \diver(M_0(1-u)\na\mu) 
  = u\frac{R_p v}{K_v+v}, \\
  & \mu = -\Gamma_1\Delta u + \Gamma_2 f'(u).
\end{align*}

\begin{table}[ht]
\centering
\begin{tabular}{|c|l|c|l|}
\hline
Symbol & Parameter & Value & Unit \\ 
\hline 
$D$      & Diffusivity      & $10^{-10}$ & m$^2$\,s$^{-1}$ \\
$M'$     & Mobility         & $2.5\cdot 10^{-8}$ & s \\
$R_c$    & Consumption rate & $10^{-2}$ & s$^{-1}$ \\
$R_p$    & Production rate  & $10^{-2}$ & kg\,m$^{-3}$\,s$^{-1}$ \\
$K_v$    & Half-saturation constant & $10^{-4}$ & kg\,m$^{-3}$ \\
$\Gamma_1$ & Distortional energy & $4\cdot 10^{-15}$ & m$^4$\,s$^{-2}$ \\
$\Gamma_2$ & Mixing free energy & $4\cdot 10^{-6}$ & m$^{2}$\,s$^{-2}$ \\
$N$ & Polymerization parameter  & $10^{3}$ & \\
$\lambda$ & Flory--Huggins parameter & 0.55 & \\
$x_0$    & Characteristic length & $10^{-4}$ & m \\
$t_0$    & Characteristic time   & $10^2$ & s \\
$v_0$    & Characteristic concentration & $10^{-3}$ & kg\,m$^{-3}$ \\
$k_BT$   & Thermal energy at $T=300$\,K & $4\cdot 10^{-21}$ & 
kg\,m$^2$\,s$^{-2}$ \\
$\widetilde{K}$ & Half-saturation constant for model of
\cite{WaZh12} & $5\cdot 10^{-4}$ &  \\
\hline
\end{tabular}

\medskip
\caption{Parameters used in the numerical simulations.}
\label{table}
\end{table}

The characteristic chemical potential $\mu_0$ is determined
by the thermal energy and the characteristic concentration
and length (see Table \ref{table}) as 
$\mu_0 = k_BT/(v_0x_0^3) = 4\cdot 10^{-6}$\,m$^2$s$^{-2}$.
The values of the physical parameters in Table \ref{table} differ
from those in \cite{WaZh12} but are of a similar order. With our values, the scaled parameters are of order one (except $K$ and $\Gamma_1^0$):
$$
  D_0 = R_c^0 = R_p^0 = 1, \quad K = 10^{-1}, \quad M_0 = 10^{-3},
  \quad\Gamma_1^0 = 10^{-1}, \quad \Gamma_2^0 = 1.
$$

\subsection{Numerical discretization}

As in \cite{ZCW08b}, we approximate equations \eqref{5.v}--\eqref{5.mu} in the one-dimensional domain $\Omega=(0,1)$ by a BDF2 (second-order Backward Differentiation Formula) discretization in time. The spatial discretization is performed by finite volumes. The scheme is explicit for the mobility and potential, using the second-order approximation $\bar{u}^k:=2u^{k-1}-u^{k-2}$, but implicit in the reactions and semi-implicit in the diffusion. Let $\Delta t>0$ be the time step size, $\Delta x>0$ the space grid size, and $x_i=i\Delta x$, $x_{i\pm 1/2}=(i\pm1/2)\Delta x$. We introduce finite-volume cells $K_i = (x_{i-1/2},x_{i+1/2})$ for $i = 1,\ldots,N_x$. Then the values $u_i^k$, $v_i^k$, and $\mu_i^k$ approximate $u(x_i,k \Delta t)$, $v(x_i, k \Delta t)$, and $\mu(x_i, k\Delta t)$ respectively for $i = 1,\ldots,N_x$, $k = 1,\ldots,N_T$. Our scheme reads for $k\ge 2$ as follows:
\begin{align*}
  & \frac{\Delta x}{2\Delta t}(3v_i^k-4v_i^{k-1}+v_i^{k-2})
  + \mathcal{G}_{i+1/2}^k-\mathcal{G}_{i-1/2}^k
  = -\Delta x R_c^0 u_i^k v_i^k, \\ 
  & \frac{\Delta x}{2\Delta t}(3u_i^k-4u_i^{k-1}+u_i^{k-2})
  + \mathcal{F}_{i+1/2}^k-\mathcal{F}_{i-1/2}^k
  = \Delta x u_i^k(1-u_i^k)\frac{R_p^0 v_i^k}{K+v_i^k}, \\
  & \mathcal{H}_{i+1/2}^k-\mathcal{H}_{i-1/2}^k + \Delta x f'(\bar{u}_i^k)
  = \Delta x\mu_i^k,
\end{align*}
where the numerical fluxes are given by
\begin{align*}
  & \mathcal{G}_{i+1/2}^k = -D_0(1-u_{i+1/2}^k)
  \frac{v_{i+1}^k-v_i^k}{\Delta x}, \\
  & \mathcal{F}_{i+1/2}^k = -M_0u_{i+1/2}^k(1-u_{i+1/2}^k)
  \frac{\mu_{i+1}^k-\mu_i^k}{\Delta x}, \quad
  \mathcal{H}_{i+1/2}^k = -\frac{u_{i+1}^k-u_i^k}{\Delta x},
\end{align*}
and $u_{i+1/2}^k=\frac12(u_{i+1}^k+u_i^k)$. The approximation $(u_i^1,v_i^1,\mu_i^1)$ at the first time step is computed from the implicit Euler method. 

In the same way, we discretized a simplified version of \cite{WaZh12} which reads in its dimensionless form for $k \geq 2$ as
\begin{align*}
  & \frac{\Delta x}{2\Delta t}(3w_i^k-4w_i^{k-1}+w_i^{k-2})
  + \mathcal{G}_{i+1/2}^k-\mathcal{G}_{i-1/2}^k
  = -\Delta x  u_i^k \frac{\widetilde{R}_c^0 v_i^k}{\tilde{K}+v}, \\ 
  & \frac{\Delta x}{2\Delta t}(3u_i^k-4u_i^{k-1}+u_i^{k-2})
  + \widetilde{\mathcal{F}}_{i+1/2}^k-\widetilde{\mathcal{F}}_{i-1/2}^k
  = \Delta x u_i^k\frac{R_p^0 v_i^k}{K+v_i^k}, \\
  & \mathcal{H}_{i+1/2}^k-\mathcal{H}_{i-1/2}^k + \Delta x f'(\bar{u}_i^k)
  = \Delta x\mu_i^k,
\end{align*}
where we abbreviated $w_i^k=(1-u_i^k)v_i^k$,
$\mathcal{G}$ and $\mathcal{H}$ are as above, 
$\widetilde{\mathcal{F}} = -M_0 u_{i+1/2}^k (\mu_{i+1}^k-\mu_i^k)/\Delta x$, and $\widetilde{R}_c^0=1$, $R_p^0=1$ are scaled rates. We use the Newton method to solve the resulting system of nonlinear equations. For the first three test cases, we used a mesh of $128$ cells and the time step size $\Delta t=10^{-3}$.

\subsection{Numerical results}

\subsubsection*{Test case $1$:}
We consider the initial conditions 
\begin{align*}
	u^0(x) = \frac{1}{2} \sin(2 \pi x)^2 + 2\cdot 10^{-2},  \quad
	v^0(x) \equiv 0.75. 
\end{align*}
The numerical solutions $u$ and $v$ are presented in Figure \ref{fig.1}.
The substrate concentration converges uniformly to zero as $t\to\infty$ because of the consumption term, while the volume fraction of the biomass is increasing in time. The increase becomes slower and stops after some time since the production term is proportional to the substrate concentration which almost vanishes for large times and hence the production term vanishes too.
In our model, both the biomass fraction and the substrate concentration change at a slower rate compared to the model of \cite{WaZh12}, which is caused by the additional factor $1-u$ in the source term. Accordingly, the convergence to the steady state is smaller in our model than in the model of \cite{WaZh12}. Note that, without the additional factor $1-u$, an initial value $u^0$ smaller but close to one may lead to a volume fraction exceeding its maximal value and consequently break down the numerical scheme.

\begin{figure}[ht]
	\includegraphics[scale=0.26]{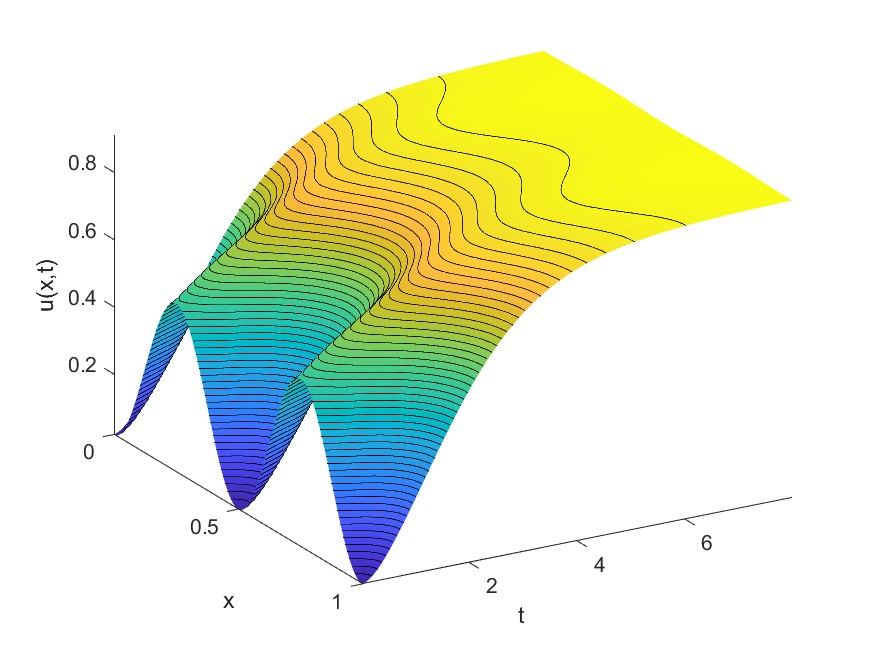}
	\includegraphics[scale=0.26]{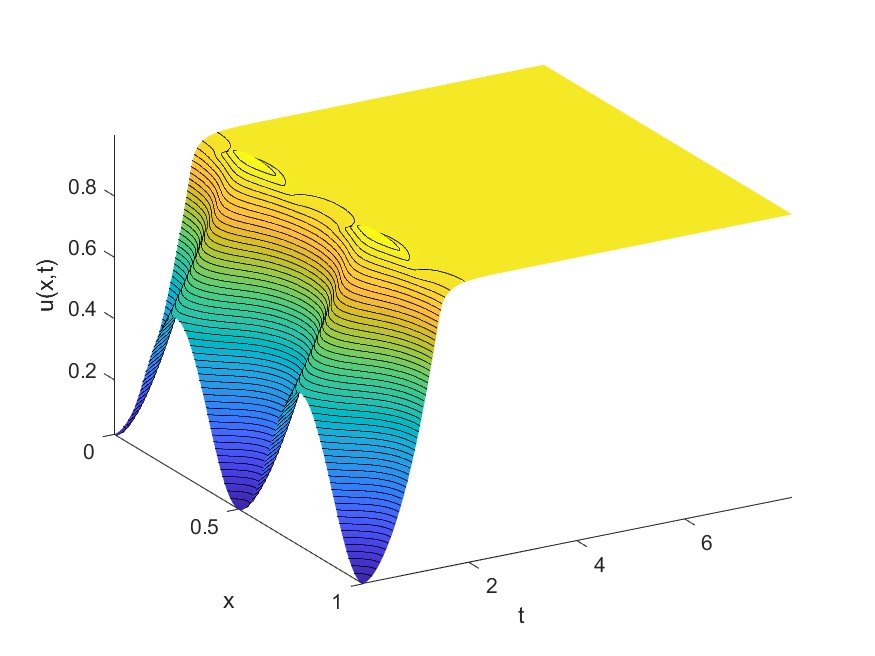}
	\includegraphics[scale=0.26]{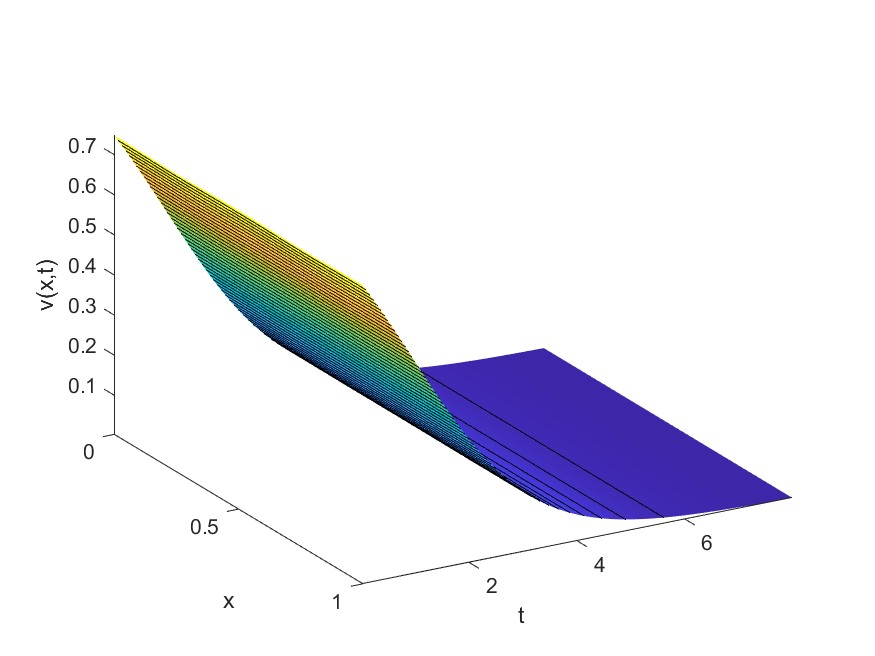}
	\includegraphics[scale=0.26]{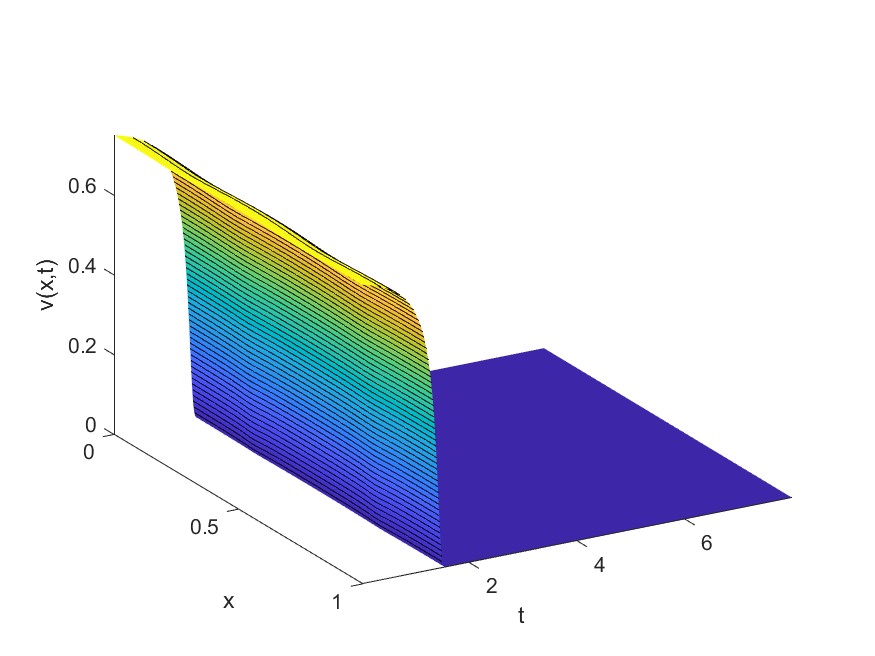}
	\caption{Biomass fraction $u$ (top) and substrate concentration $v$ (bottom) in test case $1$ for our system (left) and the system of \cite{WaZh12} (right).}
    \label{fig.1}
\end{figure}


\subsubsection*{Test case $2$:} 
We consider the initial conditions 
\begin{align*}
	u^0(x) = \begin{cases} 0.2, \quad &\text{ if } 0 \leq x \leq 0.2, \\
	1 \cdot 10^{-2}, \quad &\text{ if } 0.2 < x \leq 1, 
	\end{cases} \quad v^0(x) \equiv 0.1.
\end{align*}
In both models, the volume fraction of biomass growths rather fast until the substrate concentration vanishes; see Figure \ref{fig.2}.
Due to the additional factor $1-u$ in our mobility, we can observe a slower diffusion in areas of larger volume fraction compared to \cite{WaZh12}. In areas of low volume fractions, we observe a larger growth than for \cite{WaZh12}, which can be explained by the larger nutrient consumption compared to our model, causing a lack of nutrient supply for further growth.

\begin{figure}[ht]
	\includegraphics[scale=0.26]{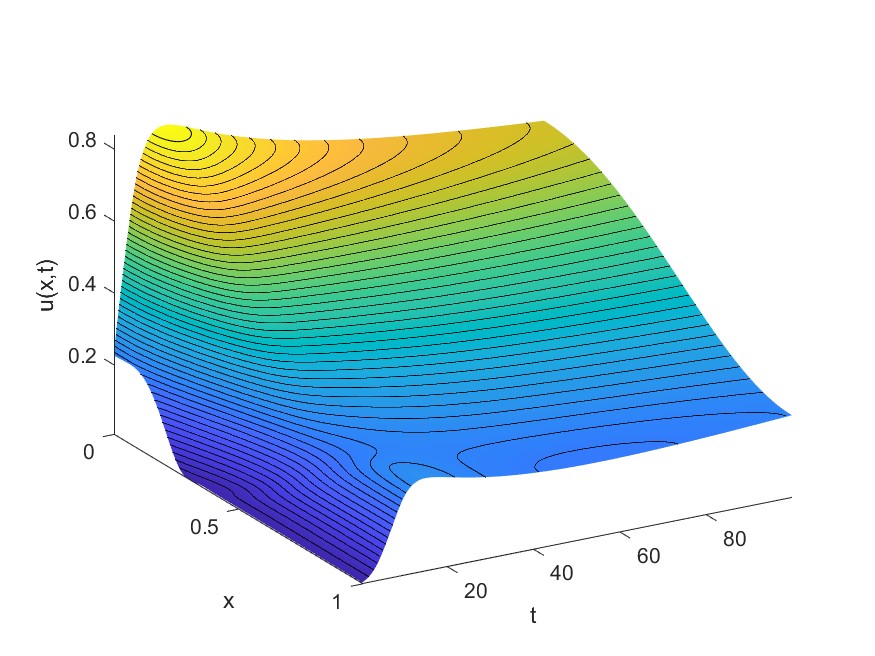}
	\includegraphics[scale=0.26]{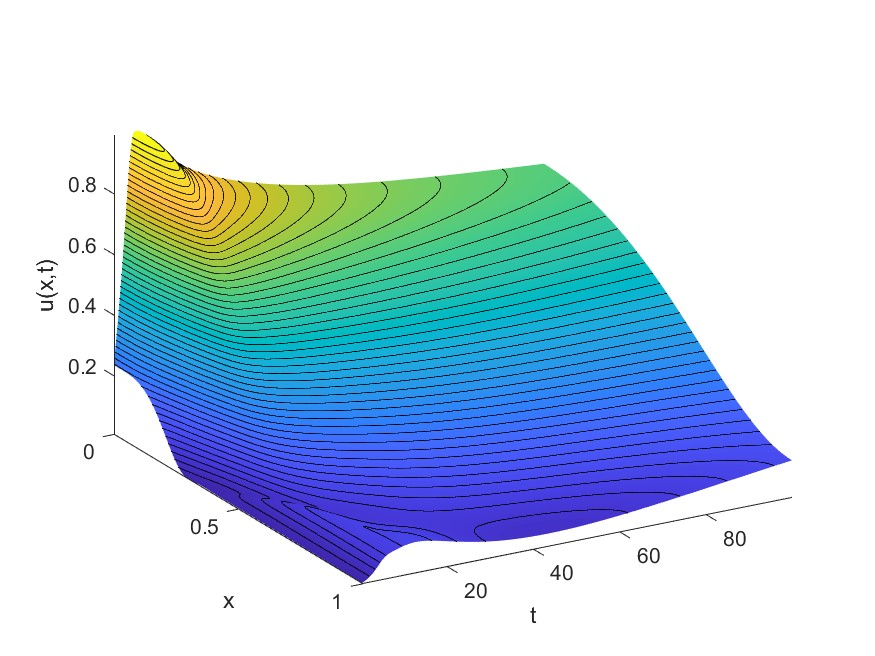}
	\caption{Biomass $u$ in test case $2$ for our system (left) and the system of \cite{WaZh12} (right).}
    \label{fig.2}
\end{figure}


\subsubsection*{Test case $3$:}
We choose the initial conditions 
\begin{equation}\label{5.ic}
	u^0(x) = -(x-1/2)^2+1/3, \quad
	v^0(x) \equiv 0.3.
\end{equation}
As in the previous test cases, we observe in Figure \ref{fig.3} a faster growth of biomass volume fraction in the model of \cite{WaZh12}. Moreover, the growth process dominates before the diffusion process flattens the maximal volume fraction towards the steady state. Due to the absence of the factor $1-u$, this effect is stronger than in the model of \cite{WaZh12}.

\begin{figure}[ht]
	\includegraphics[scale=0.26]{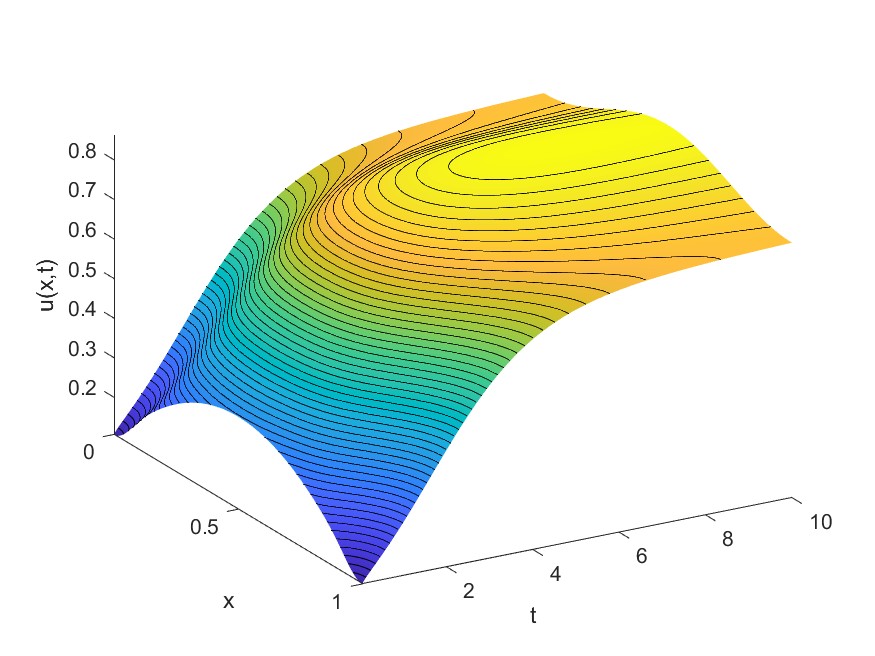}
	\includegraphics[scale=0.26]{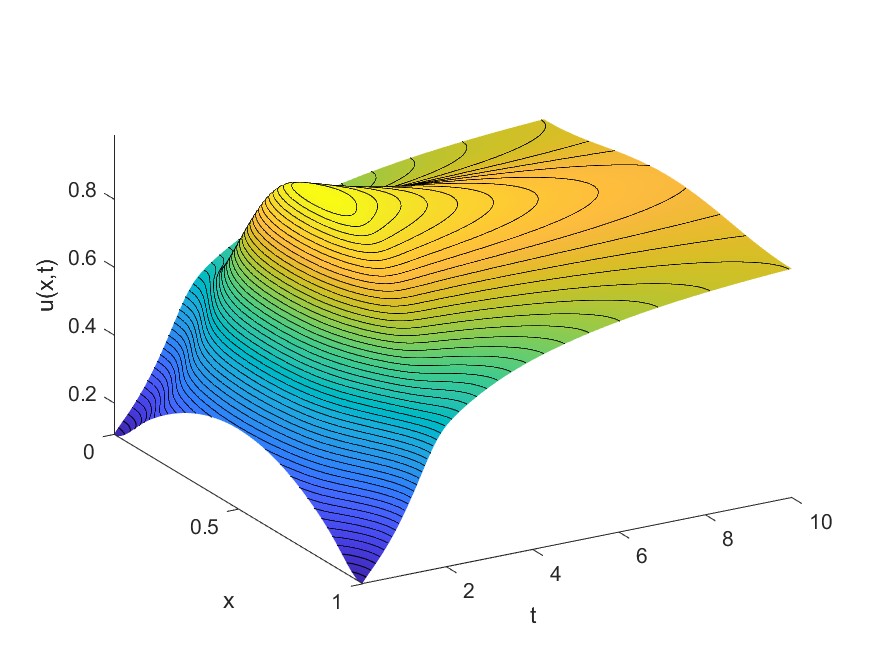}
	\caption{Biomass $u$ in test case $3$ for our system (left) and the system of \cite{WaZh12} (right).}
    \label{fig.3}
\end{figure}


\subsubsection*{Test case $4$:}
We analyze the order of convergence in space with the initial conditions
\eqref{5.ic}. Since there does not exist an explicit solution, we compute a reference solution $(u_{\mathrm{ref}},v_{\mathrm{ref}})$ at time $T = 1$ on a mesh with $2048$ cells with time step size  $\Delta t = 10^{-5}$. The approximate solutions $u^{(j)}$ are determined on meshes of $2^j$ cells for $j = 4,\ldots,10$.
We choose a rather small value for $T$ to compute the order of convergence in space before a steady state is reached.
Figure \ref{fig.4} (left) illustrates the discrete $L^2$ norm of the difference $u_{\rm ref}-u^{(j)}$ for $j=4,\ldots,10$. As expected, we observe a second-order convergence in space.

\begin{figure}[ht]
	\includegraphics[scale=0.26]{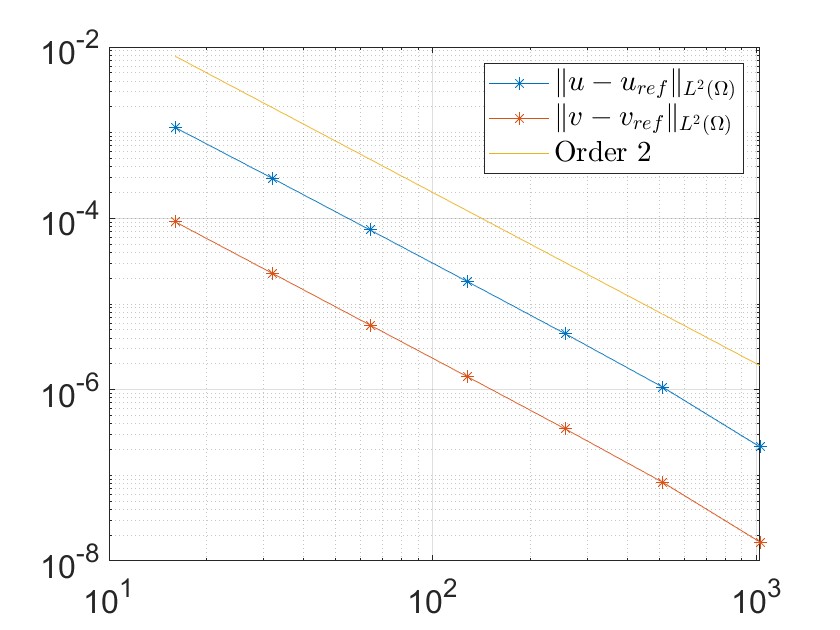}
    \includegraphics[scale=0.26]{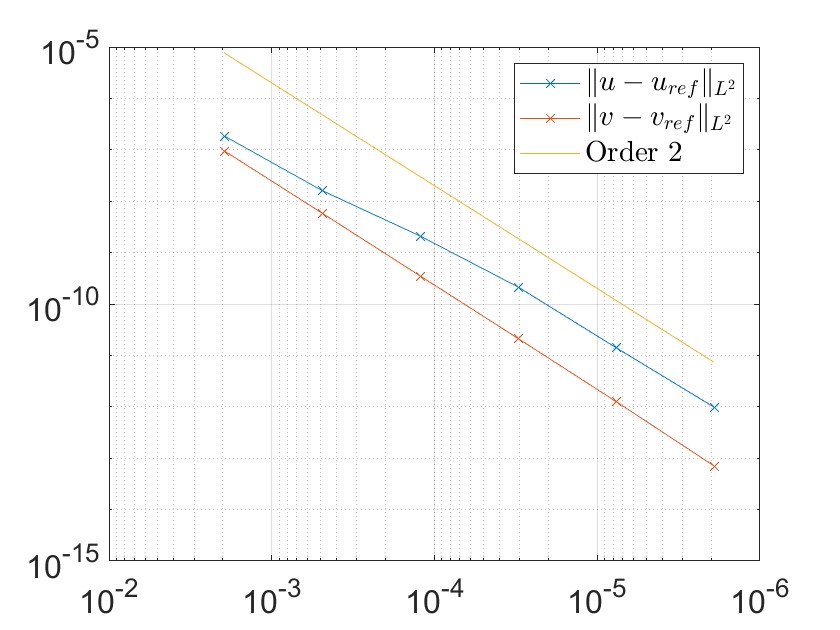}
	\caption{Convergence in space (left) and
    convergence in time (right) at time $T = 1$.}
    \label{fig.4}
\end{figure}

\subsubsection*{Test case $5$:}
We analyze the order of convergence in time by using as before the initial conditions \eqref{5.ic} and by choosing $L=128$ cells in space.
We compute a reference solution $(u_{\mathrm{ref}},v_{\mathrm{ref}})$ 
at time $T = 1$ with time step size $\Delta t = 1/(2^{14} L) \approx 5 \cdot 10^{-7}$. The approximate solutions $u^{(j)}$ are determined with time step sizes $\Delta t = 1/{(2^{2j}L)}$ for $j = 1,\ldots,6$. 
Figure \ref{fig.4} (right) illustrates the discrete $L^2$ norm of the difference $u_{\rm ref}-u^{(j)}$ for $j = 1,\ldots,6$.
We observe a convergence in time of order $1.73$ for $u$ and $2$ for $v$, respectively.


\end{document}